\newtheorem{theorem}{Theorem}
\newtheorem{remark}[theorem]{Remark}
\newtheorem{proposition}[theorem]{Proposition}
\begin{document}
\title[Range description for the free space wave operator]{Half-time Range description for the free space wave operator and the spherical
means transform}
\author{P.~Kuchment$^1$ and L.~Kunyansky$^2$}

\address{$^1$ Department of Mathematics,
Texas A\&M University,
College Station, TX 77843, USA.
}

\address{$^2$ Department of Mathematics,
University of Arizona,
Tucson, AZ 85721, USA.
}

\begin{abstract}
The forward problem arising in several hybrid imaging modalities
can be modeled by the Cauchy problem for the free space wave equation.
Solution to this problems describes propagation of a pressure wave, generated
by a source supported inside unit sphere $S$. The data $g$ represent the time-dependent
values of the pressure on the observation surface $S$.  Finding initial pressure $f$ from the known
values of $g$ consitutes the inverse problem. The latter is also  frequently
formulated in terms of the spherical means of $f$ with centers on~$S$.

Here we consider a problem of range description of the wave operator mapping $f$ into $g$.
Such a problem was considered before, with data $g$ known on time interval at least
$[0,2]$ (assuming the unit speed of sound). Range conditions were also found in terms of
spherical means, with radii of integration spheres lying in the range $[0,2]$.
However, such data are redundant. We present necessary and sufficient conditions
for function $g$ to be in the range of the wave operator, for $g$ given on a half-time
interval $[0,1]$. This also implies range conditions on spherical means measured
for the radii in the range $[0,1]$.

\end{abstract}
\maketitle

\section{Introduction}\label{S:intro}

In the thermo- and photoacoustic tomography (TAT/PAT)
\cite{KrugerLFA-MP-95,KrugerRK-MP-99,Oraev94} the forward problem can be
modeled (in the simplest case) by the following Cauchy problem for the whole
space wave equation:%
\begin{align}
u_{tt}(t,x)  &  =\Delta u(t,x),\quad(t,x)\in(0,\infty)\times\mathbb{R}%
^{d},\label{E:originalwave}\\
u(0,x)  &  =f(x),\qquad u_{t}(0,x)=0. \label{E:originalBC}%
\end{align}
Solution $u(t,x)$ of this problem represents the excess pressure in the
acoustic wave. This wave is caused by an instantaneous thermoelastic expansion
at $t=0$ leading to the initial condition $u(0,x)=f(x).$ Function $f(x)$ is
assumed to be supported within the open unit ball $B\subset\mathbb{R}^{d}$
centered at the origin, and is extended by zero to $\mathbb{R}^{d}$. The
measurements in TAT/PAT are the values $g(t,\theta)$ of the pressure
registered during the time interval $t\in(0,T]$ on the unit sphere $\theta\in
S:=\mathbb{S}^{d-1}$:%
\begin{equation}
g(t,\theta):=u(t,\theta),\quad(t,\theta)\in Z_{T}, \label{E:measurement}%
\end{equation}
where $Z_{T}:=(0,T]\times S$ is the cylinder supporting the data. The data
$g(t,\theta)$ can be viewed as an action of \textbf{the observation operator}
$\mathcal{A}$ on function $f$ :
\begin{equation}
\mathcal{A}:f(x)\in C_{0}^{\infty}(B)\mapsto g(t,\theta),\quad(t,\theta)\in
Z_{T}. \label{E:operator}%
\end{equation}

The inverse source problem of TAT/PAT consists of finding $f$ from data
$g=\mathcal{A}f$. It has been studied extensively in the last two decades (see
\cite{KuKuHand15,KuchCBMS14,KuchScherzer,KuKuEncycl} and references therein).
In particular, explicit inversion formulas have been found \cite{MXW1,Finch04,
Finch07,KunyanskyIP07} allowing one to invert the operator $\mathcal{A}$ and
reconstruct $f$ from $g$ under the assumption $T\geq2$. In this case, $T=2$
represents the time sufficient for all characteristics originated within $B$
to reach the boundary $\partial B$ of $B$. Thus, the most studied case is
$T\geq2$. However, shorter acquisition times are of practical and theoretical
importance. We concentrate here on the case $T=1$, which is known to be
sufficient, since by that time at least one of two characteristics originating
at any point in $B$ reaches the boundary $\partial B$. In addition, explicit
inversion formulas have been found \cite{AmbGouia18,Do-Kun18} for
reconstructing $f$ from \textquotedblleft half-time\textquotedblright\ data
$g$ known on the cylinder $Z_{1}:=(0,1]\times S$.

It is well known \cite{JohnSph04} that the data $g(t,\theta)$ obtained by
solving the problem (\ref{E:originalwave})-(\ref{E:measurement}) can be
expressed in terms of the spherical means $M(r,\theta)$ of $f$ with centers
$\theta\in$ $S$:%
\begin{equation}
M(r,\theta):=\int\limits_{S}f(\theta+r\tau)d\tau,\quad(r,\theta)\in
\mathbb{R}^{+}\times S, \label{E:means}%
\end{equation}
where $d\tau$ is the standard area element on the unit sphere. Indeed,
\begin{equation}
g(t,\theta)=\frac{\partial}{\partial t}\int\limits_{0}^{t}M(r,\theta
)G_{d}(t,r)r^{d-1}dr, \label{E:g_from_means}%
\end{equation}
where $\mathbf{G}(t,x)=G_{d}(t,|x|)$ is the causal free-space Green's function
for the wave equation (\ref{E:originalwave}). Explicit expressions for
$G_{d}(t,r)$ are well known (see, e.g., \cite{variations}):
\begin{equation}%
\begin{array}
[c]{c}%
G_{2}(t,r)=\frac{\chi_{+}(t-r)}{2\pi\sqrt{t^{2}-r^{2}}},\quad G_{3}%
(t,r)=\frac{\delta(t-r)}{4\pi r},\\
\\
\mbox{ and for }d>3,\\
\\
G_{d+2}(t,r)=-\frac{1}{2\pi r}\frac{\partial}{\partial r}G_{d}%
(t,r),\label{E:Green-new}%
\end{array}
\end{equation}
Here $\delta(t)$ is the Dirac's delta function, and $\chi_{+}(s)$ is the
Heaviside function, equal to 1 when $s>0$ and equal to 0 otherwise.

Equation (\ref{E:g_from_means}) is easily solvable with respect to
$M(r,\theta)$, so that the knowledge of $g(t,\theta)$ on $Z_{T}$ implies the
knowledge of $M(r,\theta)$ on $Z_{T}$ and \emph{vice versa} (see, e.g.
\cite{AgrFK09,JohnSph04}). As a consequence of this, historically, most of the
results in this area have been obtained in terms of the spherical means $M$,
rather than in terms of the wave data $g$.

We are interested here in the range of the operator $\mathcal{A}$ (see
(\ref{E:operator})). Namely, we study the necessary and sufficient conditions
for a function $g\in C_{0}^{\infty}(Z_{T})$ to lie in the range of
$\mathcal{A}$. Such range descriptions were obtained in
\cite{AgrKQ07,AgrFK09,finch2006range,venky2023range} in terms of spherical
means $M(r,\theta)$ defined on $C_{0}^{\infty}([0,2]\times S)$, which
corresponds to the case of \textquotedblleft full time\textquotedblright%
\ observation, i.e., $T=2$.

However, these results are not optimal, in the sense that they use
\textquotedblleft too much\textquotedblright\ data. Indeed, as we have already
mentioned, it is known that a stable solution of the inverse source problem of
TAT/PAT is possible if data $g(t,\theta)$ are given on a twice shorter
cylinder $Z_{1}$ (see, e.g., \cite{AmbGouia18,Do-Kun18}). To put it
differently, this implies that the data on the time interval $[1,\infty)$ are
uniquely determined by the data on interval $(0,1].$ In terms of applications
to TAT/PAT, this means that the data acquisition time can be halved
\cite{Anashalf}. This makes the measurement system cheaper, and potentially
improves the quality of the image, since, for a variety of reasons, the
acoustic wave quickly deteriorates in time, and thus \textquotedblleft
late\textquotedblright\ data get degraded. We refer to such data as
\textquotedblleft half-time\textquotedblright\ data, as is done in the title
of this paper.

In the present paper, we find the range description of the operator
\[
\mathcal{A}:C_{0}^{\infty}(B)\rightarrow C_{0}^{\infty}(Z_{1}),
\]
i.e. we are working with the data given on time interval $(0,1]$. Formulation
of the problem and the main result are given in Section \ref{S:statement},
some preliminary considerations are made in Section \ref{S:prelim} after which the main
results are described in Sections \ref{S:rangeRadon terms} and \ref{S:range_expansions}. The proofs are presented in
Section \ref{S:proofs}.



\section{Formulation of the problem}
\label{S:statement}


We consider operator
\begin{align}
\mathcal{A}  &  :C_{0}^{\infty}(B)\rightarrow C_{0}^{\infty}(Z_{1}%
),\nonumber\\
\mathcal{A}  &  :f\mapsto g(t,\theta),\quad(t,\theta)\in Z_{1},
\label{E:rangeA}%
\end{align}
where $g(t,\theta)$ is the trace of the solution of the Cauchy problem
(\ref{E:originalwave})-(\ref{E:originalBC}) on $Z_{1}$ (see
(\ref{E:measurement})). Our aim is to find necessary and sufficient condition
for a function $b(t,\theta)\in C_{0}^{\infty}(Z_{1})$ to be in the range of
$\mathcal{A}$, i.e. to be represented as $b=\mathcal{A}f$ for some $f\in
C_{0}^{\infty}(B)$. We somewhat abuse the notations here, since functions from
$C_{0}^{\infty}(Z_{1})$ and their derivatives are not required to vanish at
$t=1$. They vanish only near $t=0$.


\section{Some preliminary constructions}
\label{S:prelim}
Our approach is based on studying the Radon transform of a solution to a
certain auxiliary initial/boundary value problem (IBVP) for the wave equation
in the exterior of the closed ball $\overline{B}$. We start by defining and
analyzing the tools we need.

\subsection{The Radon transform and its range}
\label{SS:Radon}

Consider a compactly supported continuous function $q(x)$ on $\mathbb{R}^{d}$.
For a unit direction vector $\omega\in S$ and $p\in\mathbb{R}$, the values of
the Radon transform $\mathcal{R}q(\omega,p)$ of $q$ are given by the following
formula (see, e.g., \cite{Natt_old}):
\begin{equation}
\label{E:Radontr}%
\begin{array}
[c]{c}%
\left[  \mathcal{R}q\right]  (\omega,p):=\int\limits_{\mathbb{R}^{d}%
}q(x)\delta(x\cdot\omega-p)dx\\
=\int\limits_{\Pi(\omega,p)}q(x)ds \mbox{ for } (\omega,p)\in S\times
\mathbb{R}.
\end{array}
\end{equation}
Here $\delta(\cdot)$ is the Dirac's delta function and $\Pi(\omega,p)$ is the
plane defined by the equation
\begin{equation}
x\cdot\omega=p. \label{E:plane}%
\end{equation}

The following description of the range of the Radon transform $\mathcal{R}$ of
functions from $C_{0}^{\infty}(B)$ is well known
\cite{Helgason10,Natt_old,KuchCBMS14,GGG80,GGV,GGGbook03,Helg-RT-99,Ludwig}:

\begin{theorem}
\label{T:radon-range} A function $F(\omega,p)$ defined on $S\times
\mathbb{(}-1,1)$ can be represented as the Radon transform of a function $f\in
C_{0}^{\infty}(B)$, if and only if the following conditions are satisfied:

\begin{enumerate}
\item Symmetry condition: $F(\omega,p)=F(-\omega,-p)$,

\item Smoothness and support condition: $F(\omega,p)\in\mathbb{C}_{0}^{\infty
}(S\times(-1,1))$ ,

\item Moment conditions: for any $n=0,1,2,...$, the moment $M_{n}(\omega)$
\begin{equation}
M_{n}(\omega):=\int\limits_{-1}^{1}F(\omega,p)p^{n}dp \label{E:moments}%
\end{equation}
\newline is the restriction from $\mathbb{R}^{d}$ to $S$ of a homogeneous
polynomial of degree $n$ in $\omega$.
\end{enumerate}
\end{theorem}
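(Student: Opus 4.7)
The plan is to attack the two directions separately, with necessity being a routine consequence of the definition of $\mathcal{R}$ and sufficiency relying on the classical Fourier slice theorem together with Paley--Wiener.

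For necessity I would observe that $\Pi(\omega,p)$ coincides with $\Pi(-\omega,-p)$, giving the symmetry condition; that $f\in C_{0}^{\infty}(B)$ forces $\mathcal{R}f(\omega,p)=0$ for $|p|\geq 1$ together with joint smoothness in $(\omega,p)$ inherited from $f$, giving condition (2); and that unwinding the delta distribution yields
\begin{equation*}
M_{n}(\omega)=\int_{-1}^{1}\mathcal{R}f(\omega,p)\,p^{n}\,dp=\int_{\mathbb{R}^{d}}f(x)\,(x\cdot\omega)^{n}\,dx,
\end{equation*}
which, upon expansion of $(x\cdot\omega)^{n}$, is manifestly the restriction to $S$ of a homogeneous polynomial of degree $n$ in $\omega$.

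For sufficiency the main idea is to take the one-dimensional Fourier transform in $p$,
\begin{equation*}
\widetilde{F}(\omega,\sigma):=\int_{-1}^{1}F(\omega,p)\,e^{-i\sigma p}\,dp,
\end{equation*}
and to invoke the fact that if $F=\mathcal{R}f$, the Fourier slice theorem gives $\widetilde{F}(\omega,\sigma)=\widehat{f}(\sigma\omega)$. I would therefore \emph{define} a candidate $\widehat{f}$ on $\mathbb{R}^{d}\setminus\{0\}$ by $\widehat{f}(\xi):=\widetilde{F}(\xi/|\xi|,|\xi|)$; the symmetry condition (1) ensures this is unambiguous under $(\omega,\sigma)\mapsto(-\omega,-\sigma)$. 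The support and smoothness condition (2) makes $\widetilde{F}(\omega,\cdot)$, for each $\omega$, an entire function of exponential type at most $1$, with rapid decay along the real axis together with its $\omega$--derivatives. Once $\widehat{f}$ is shown to lie in $C^{\infty}(\mathbb{R}^{d})$, Paley--Wiener will guarantee that its inverse Fourier transform $f$ is smooth and supported in $\overline{B}$, and inverting the slice identity recovers $F=\mathcal{R}f$.

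The main obstacle, and the reason the moment conditions enter, is establishing smoothness of $\widehat{f}$ across the origin $\xi=0$. Expanding $\widetilde{F}$ in powers of $\sigma$ and integrating term by term gives
\begin{equation*}
\widetilde{F}(\omega,\sigma)=\sum_{n=0}^{\infty}\frac{(-i\sigma)^{n}}{n!}M_{n}(\omega),
\end{equation*}
so under the substitution $\xi=\sigma\omega$ each summand becomes $(-i)^{n}|\xi|^{n}M_{n}(\xi/|\xi|)/n!$. This expression is a smooth function of $\xi$ near the origin precisely when $M_{n}(\omega)$ extends from $S$ to a homogeneous polynomial of degree $n$ on $\mathbb{R}^{d}$, which is hypothesis (3). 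The delicate point is to convert this formal Taylor analysis into genuine $C^{\infty}$ smoothness at $0$: one has to estimate the remainder after $N$ terms of the expansion of $\widetilde{F}$ in $\sigma$, uniformly in $\omega$ together with angular derivatives, and verify that after the change of variables $\xi=\sigma\omega$ it contributes a function of class $C^{N-d}$ near the origin. Letting $N\to\infty$ then yields $\widehat{f}\in C^{\infty}(\mathbb{R}^{d})$, and Paley--Wiener closes the argument. This moment-to-smoothness step is where I expect the proof to do real work; everything else is bookkeeping.
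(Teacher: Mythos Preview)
The paper does not actually prove this theorem: it is stated there as a well-known classical result, with references to Helgason, Natterer, Gelfand--Gindikin--Graev, and Ludwig, and then used as a black box in the subsequent arguments. Your proposal is precisely the standard proof found in those references---necessity by direct computation, sufficiency via the Fourier slice theorem and Paley--Wiener, with the moment conditions guaranteeing that the candidate $\widehat{f}$ extends smoothly across the origin---so there is nothing to compare against. One small refinement: since condition (2) places $F$ in $C_{0}^{\infty}(S\times(-1,1))$, the support of $F(\omega,\cdot)$ is contained in $[-1+\varepsilon,1-\varepsilon]$ for some $\varepsilon>0$, so Paley--Wiener yields $\operatorname{supp} f\subset\{|x|\le 1-\varepsilon\}$, which is indeed compact in the open ball $B$ as required.
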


We will use the real-valued spherical harmonics $Y_{l}^{\mathbf{m}}(\omega)$
on the unit $(d-1)$-dimensional sphere $S$, where $l=0,1,2,\dots$ and
$\left\vert \mathbf{m}\right\vert \leq l$ (see, for instance,
\cite{OlverNIST10}). They form an orthonormal basis in $L_{2}(S)$:
\begin{equation}
\int\limits_{S}Y_{l}^{\mathbf{m}}(\omega)Y_{l^{\prime}}^{\mathbf{m}^{\prime}%
}(\omega)d\omega=1~\text{if }l=l^{\prime},\mathbf{m=m}^{\prime}\text{, and
zero otherwise.} \label{E:Y}%
\end{equation}
Using these, the moment conditions (3) of the theorem can be equivalently
re-written as%
\begin{equation}%
\begin{array}
[c]{c}%
\int\limits_{S}M_{n}(\omega)Y_{l}^{\mathbf{m}}(\omega)d\omega=\int%
\limits_{S}\left[  \int\limits_{-1}^{1}F(\omega,p)p^{n}dp\right]
Y_{l}^{\mathbf{m}}(\omega)d\omega=0\\
\mbox{ for }n=0,1,2,...,l>n,\text{ }\left\vert \mathbf{m}\right\vert \leq
l.\label{E:standard_moment}%
\end{array}
\end{equation}

\subsubsection{Exterior Radon transform}

We will also need the \textbf{exterior (with respect to $\overline{B}$) Radon
transform} $\mathcal{R}^{E}$. For a continuous function $h(x)$ defined and
compactly supported in the exterior $B^{c}$ of $B$, $\mathcal{R}^{E}$ is
defined by the formula similar to (\ref{E:Radontr}) but with $|p| \ge1$:
\begin{equation}
\label{E:Radon_ext}%
\begin{array}
[c]{c}%
\left[  \mathcal{R}^{E}h\right]  (\omega,p):=\int\limits_{\mathbb{R}^{d}%
}h(x)\delta(x\cdot\omega-p)dx\\
=\int\limits_{\Pi(\omega,p)}h(x)ds, \mbox{ for }(\omega,p)\in S\times
(\mathbb{R}\setminus(-1,1)).
\end{array}
\end{equation}

\begin{remark}
If function $h$ were defined in the whole $\mathbb{R}^{d}$, not just outside
of the \textquotedblleft hole\textquotedblright\ $B$, transform $\mathcal{R}%
^{E}$ would be just a restriction of the full Radon transform of $h$.
\end{remark}

Our goal is to obtain range description for the wave operator,
similar to the above range description for the Radon transform $\mathcal{R}$.
We will achieve this by relating the Radon transform to a solution of a
certain IBVP (initial/boundary value problem) for the wave equation in the
exterior $B^{c}:=\mathbb{R}^{d}\setminus\overline{B}$ of the ball
$\overline{B}$. This technique was previously used in \cite{EllerKun} to
reduce the problem of inverting operator $\mathcal{A}$ to inverting the
classical Radon transform.

\subsection{Exterior initial/boundary value problem}

\label{SS:exterior}

We will denote by $F(\omega,p)$ the Radon transform of the initial function
$f(x)\in C_{0}^{\infty}(B)$:%
\begin{equation}
F(\omega,p):=\left[  \mathcal{R}f\right]  (\omega,p).
\label{E:Radon_projections}%
\end{equation}

Further, given a function $b\in C_{0}^{\infty}(Z_{1}),$ consider the solution
$v^{(b)}(t,x)$ to the following exterior problem in $(0,\infty)\times
\mathbb{R}^{d}\backslash\bar{B}$:
\begin{equation}
\left\{
\begin{array}
[c]{ll}%
\frac{\partial^{2}v^{(b)}(t,x)}{\partial t^{2}}-\Delta v^{(b)}(t,x)=0, &
(t,x)\in(0,1]\times B^{c},\\
v^{(b)}(0,x)=0,\quad v_{t}^{(b)}(0,x)=0, & x\in B^{c},\\
v^{(b)}(t,\theta)=b(t,\theta). & (t,\theta)\in Z_{1}.
\end{array}
\right.  \label{E:exterior_problem}%
\end{equation}
The domain $(0,1)\times B^{c}$ where the problem is solved, and the support of
the solution $v^{(b)}(t,x)$ are shown in Figure~1.
\begin{figure}[ht!]
\begin{center}%
\begin{tabular}
[c]{ccc}%
\includegraphics[scale = 0.6]{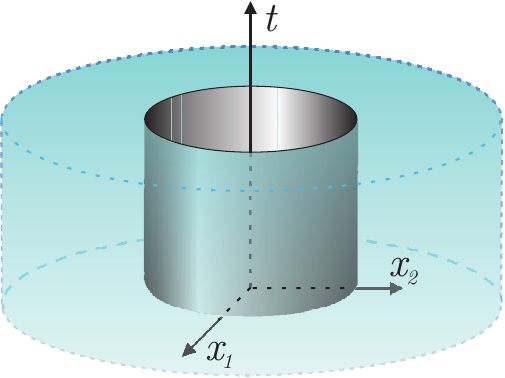} & \phantom{aa} &
\includegraphics[scale = 0.6]{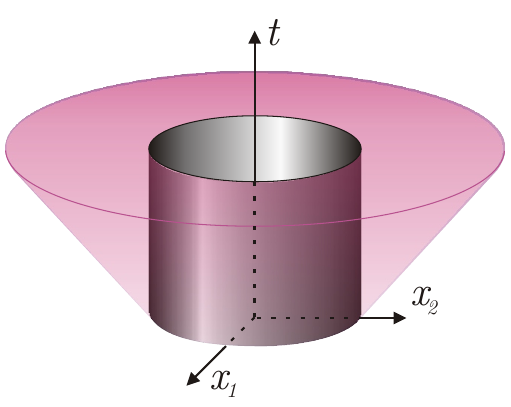}\\
(a) &  & (b)\\
&  &
\end{tabular}
\end{center}
\caption{Geometry of the exterior problem: \newline\phantom{aaa} (a) Domain
$(0,1)\times B^{c}$ where the problem is solved; \newline\phantom{aaa} (b)
Support of the solution $v^{(b)}(t,x)$.}%
\end{figure}

Note that the solution $u$ of the problem (\ref{E:originalwave}),
(\ref{E:originalBC}) also solves the exterior problem
(\ref{E:exterior_problem}), if $b(t,\theta)$ is equal to $g(t,\theta)$.
Therefore, if $b=g,$ the solution $v^{(g)}$ of the exterior problem
(\ref{E:exterior_problem}) coincides with $u$ in $(0,1]\times B^{c}.$ Let us
now consider for each fixed $t$ the Radon projections $\left[  \mathcal{R}u\right]  (t,\omega,p)$ (defined by
(\ref{E:Radontr})) of $u$. It is
well known (e.g., see \cite{JohnSph04}) that, for a fixed $\omega,$ such
projections satisfy the 1D wave equation:%
\begin{equation}
\frac{\partial^{2}}{\partial t^{2}}\left[  \mathcal{R}u\right]  (t,\omega
,p)=\frac{\partial^{2}}{\partial p^{2}}\left[  \mathcal{R}u\right]
(t,\omega,p),\qquad(t,p)\in(0,1)\times\mathbb{R}. \label{E:1Dwave}%
\end{equation}
Due to (\ref{E:originalBC}), $\mathcal{R}u$ satisfies the following initial conditions:%
\begin{equation}
\left[  \mathcal{R}u\right]  (0,\omega,p)=F(\omega,p),\quad\frac{\partial
}{\partial t}\left[  \mathcal{R}u\right]  (0,\omega,p)=0,\qquad p\in
\mathbb{R}. \label{E:1Dinitial}%
\end{equation}
According to d'Alembert's formula, one can express $\mathcal{R}u$ on
$(0,1)\times\mathbb{R}$ as follows:
\begin{equation}
\label{E:Radon_of_wave}%
\begin{array}
[c]{c}%
\lbrack\mathcal{R}u](t,\omega,p)=\frac{1}{2}\left[  F(\omega,p+t)+F(\omega
,p-t)\right] \\
\\
\mbox{ for } (t,p)\in(0,1)\times\mathbb{R}.
\end{array}
\end{equation}
Since $f(x)$ is compactly supported in $B$, its Radon projections
$F(\omega,p)$ are supported inside the region $\left\{  (\omega,p)\,:\,p\in
\lbrack-1,1]\right\}  $. Therefore, for $t=1$ we get
\begin{equation}
\lbrack\mathcal{R}u](1,\omega,p)=\left\{
\begin{array}
[c]{cc}%
\frac{1}{2}F(\omega,p-1), & p\in\lbrack1,2],\\
\frac{1}{2}F(\omega,p+1), & p\in\lbrack-2,-1]
\end{array}
\right.  , \label{E:Ru_from_f}%
\end{equation}
or
\[
F(\omega,p)=\left\{
\begin{array}
[c]{cc}%
2[\mathcal{R}u](1,\omega,p+1), & p\in\lbrack0,1]\\
2[\mathcal{R}u](1,\omega,p-1), & p\in\lbrack-1,0]
\end{array}
\right.  .
\]
Equivalently, if $v^{(g)}$ is the solution of the exterior problem
(\ref{E:exterior_problem}) with\ $b=g=\mathcal{A}f$, the Radon transform $F$
of $f $ can be expressed as follows:
\begin{equation}
F(\omega,p)=\left\{
\begin{array}
[c]{cc}%
2[\mathcal{R}v^{(g)}](1,\omega,p+1), & p\in\lbrack0,1]\\
2[\mathcal{R}v^{(g)}](1,\omega,p-1), & p\in\lbrack-1,0]
\end{array}
\right.  . \label{E:Rvg}%
\end{equation}

We thus have proven the following theorem (that first appeared in
\cite{EllerKun}):

\begin{theorem}
The Radon transform $F(\omega,p)$ of the initial function $f(x)$, and thus the
function itself, can be recovered by using (\ref{E:Rvg}) from the solution $v$
of the exterior problem (\ref{E:exterior_problem}).
\end{theorem}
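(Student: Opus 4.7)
The plan is to observe that the formula (\ref{E:Rvg}) has essentially been derived in the paragraphs preceding the theorem, so the proof amounts to assembling those steps into a rigorous argument. First I would verify that $u$, the solution of the Cauchy problem (\ref{E:originalwave})--(\ref{E:originalBC}), restricted to $(0,1]\times B^{c}$, actually solves the exterior IBVP (\ref{E:exterior_problem}) with $b=g=\mathcal{A}f$. The wave equation is satisfied in the exterior trivially; the zero initial conditions hold in $B^{c}$ because $\mathrm{supp}\,f\subset B$ and $u_{t}(0,\cdot)=0$ in all of $\mathbb{R}^{d}$; the Dirichlet condition on $S$ is exactly (\ref{E:measurement}). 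By uniqueness for the exterior Dirichlet IBVP for the wave equation, $u=v^{(g)}$ on $(0,1]\times B^{c}$.

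Next I would apply the Radon transform $\mathcal{R}$ in the spatial variable $x$ to $u(t,\cdot)$. Since $\mathcal{R}\Delta = \partial_{p}^{2}\mathcal{R}$, and since $u$ solves the whole-space wave equation, the function $[\mathcal{R}u](t,\omega,p)$ satisfies the 1D wave equation (\ref{E:1Dwave}) for every fixed $\omega\in S$. The initial conditions (\ref{E:1Dinitial}) follow directly from (\ref{E:originalBC}) and the definition (\ref{E:Radon_projections}). D'Alembert's formula then yields (\ref{E:Radon_of_wave}).

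Now I would invoke the key support property: since $f\in C_{0}^{\infty}(B)$, the Radon projections $F(\omega,\cdot)$ vanish outside $[-1,1]$. Thus at $t=1$ the two d'Alembert summands $F(\omega,p+1)$ and $F(\omega,p-1)$ have disjoint supports, contained in $[-2,0]$ and $[0,2]$ respectively. Reading off each piece separately on $p\in[1,2]$ and $p\in[-2,-1]$ produces (\ref{E:Ru_from_f}); solving for $F$ and then substituting $u=v^{(g)}$ (available on $\{t=1\}\times B^{c}$, where $|x|\geq 1$, which is precisely the region of integration for $[\mathcal{R}v^{(g)}](1,\omega,p\pm 1)$ when $p\in[-1,1]$) yields (\ref{E:Rvg}).

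The main conceptual point — and the step I would pause on longest — is the identification $u\equiv v^{(g)}$ in the exterior, since $v^{(g)}$ is only defined as the solution of an exterior IBVP with prescribed boundary trace. One has to verify both that $u\big|_{(0,1]\times B^{c}}$ is an admissible solution of (\ref{E:exterior_problem}) and that the exterior IBVP has at most one such solution in the appropriate function class. Everything else is an explicit computation via d'Alembert and the compact support of $f$.
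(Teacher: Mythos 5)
Your proposal is correct and follows essentially the same route as the paper: identify $u$ with $v^{(g)}$ on $(0,1]\times B^{c}$, pass to Radon projections, use d'Alembert together with the support of $F$ in $[-1,1]$ to separate the two travelling summands at $t=1$, and read off (\ref{E:Rvg}). Your extra care about uniqueness for the exterior IBVP and about the planes with $|p|\geq 1$ lying entirely in $B^{c}$ only makes explicit what the paper leaves implicit.
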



\section{ The implicit range characterization theorem}
\label{S:rangeRadon terms}

The considerations of Section \ref{S:prelim} lead to the following
characterization of the range of observation operator $\mathcal{A}$ in terms
of solution of the exterior problem:

\begin{theorem}
\label{T:range implicit} Suppose that function $g$ is defined on $Z_{1}$,
$v^{(g)}$ is the solution to the exterior problem (\ref{E:exterior_problem}),
and $F(\omega,p)$ is defined by (\ref{E:Rvg}). Then function $g$ belongs to
the range of the operator $\mathcal{A}$ (\ref{E:rangeA}) (i.e.,$g=\mathcal{A}%
f$ for some $f\in C_{0}^{\infty}(B)$) \textbf{if and only if} the following
four conditions are satisfied:

\begin{enumerate}
\item Smoothness of $g$: $g\in C_{0}^{\infty}(Z_1)$.

\item Symmetry of $F$: $F(\omega,p)=F(-\omega,-p)$,

\item Smoothness of $F$: $F(\omega,p)\in\mathbb{C}_{0}^{\infty}(S\times
(-1,1))$,

\item Moment conditions for $F$: ${\int\limits_{S}\left[  \int\limits_{-1}%
^{1}F(\omega,p)p^{n}dp\right]  Y_{l}^{\mathbf{m}}(\omega)d\omega
}=0,\mathbf{\ \ \ \ \ \ \ \ \ \ \ \ \ \ \ \ \ \ \ \ \ \ \ \ \ }$%
\ \ \ \ \ \ \ \ \ \ \ \ \ \ \ \ \ \ \ \ \ \ \ \ \ \ \ \ \ \ \ \ for
$n=0,1,2,...,$ $l>n,\mathbf{\ }\left\vert \mathbf{m}\right\vert \leq l.$
\end{enumerate}
\end{theorem}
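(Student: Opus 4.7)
The plan is to establish the two implications separately, with the sufficiency direction hinging on an injectivity property of the map that sends boundary data $g$ to the function $F$ defined by (\ref{E:Rvg}).

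The \textbf{necessity} direction follows essentially from the preliminaries in Section~\ref{S:prelim}. If $g=\mathcal{A}f$ with $f\in C_0^\infty(B)$, then $g\in C_0^\infty(Z_1)$ by standard regularity for the Cauchy problem. Since $f$ vanishes on $B^c$, the Cauchy solution $u$ restricted to $[0,1]\times B^c$ has zero initial data and boundary trace $g$, so by uniqueness for the exterior problem (\ref{E:exterior_problem}) we have $v^{(g)}\equiv u$ there. Formula (\ref{E:Rvg}) then identifies $F$ with $\mathcal{R}f$, and conditions (2)--(4) are exactly the range conditions supplied by Theorem~\ref{T:radon-range} applied to $f$.

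For \textbf{sufficiency}, conditions (2)--(4) together with Theorem~\ref{T:radon-range} produce $f\in C_0^\infty(B)$ with $\mathcal{R}f=F$. Set $\tilde{g}:=\mathcal{A}f$; by the necessity direction just proven, $\tilde{g}$ and the given $g$ produce the same $F$ via (\ref{E:Rvg}). The task therefore reduces to the linear injectivity statement: if $h\in C_0^\infty(Z_1)$ satisfies $\mathcal{R}v^{(h)}(1,\omega,p)=0$ for all $(\omega,p)\in S\times([1,2]\cup[-2,-1])$, then $h\equiv 0$. To prove this, I would first use finite speed of propagation: $v^{(h)}(1,\cdot)$ is supported in the annulus $\{1\le|x|\le 2\}$, so its Radon transform also vanishes for $|p|>2$, and combined with the hypothesis $\mathcal{R}v^{(h)}(1,\omega,p)=0$ for all $|p|\ge 1$. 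Next, for each fixed $\omega$, the function $V(t,p):=\mathcal{R}v^{(h)}(t,\omega,p)$ satisfies the one-dimensional wave equation $V_{tt}=V_{pp}$ on the strip $\{0<t<1,\ p>1\}$, because the hyperplane $\Pi(\omega,p)$ lies entirely in $B^c$ (where $v^{(h)}$ is a smooth wave) and the tangential Laplacian along $\Pi$ contributes nothing by compactness of support of $v^{(h)}(t,\cdot)$ on the plane. With zero initial data $V(0,p)=V_t(0,p)=0$ and the terminal condition $V(1,p)=0$ on $(1,2)$, d'Alembert on the quarter-plane represents $V(t,p)$ as $V(t-p+1,1)$ for $t>p-1$ (and zero otherwise), forcing the boundary trace $V(\cdot,1)$ to vanish on $(0,1)$ and hence $V\equiv 0$ throughout the strip. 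The symmetric argument for $p<-1$ gives $\mathcal{R}v^{(h)}(t,\omega,p)=0$ for all $|p|>1$ and $t\in[0,1]$. Helgason's support theorem, applied slice-by-slice in $t$ to $v^{(h)}(t,\cdot)$ extended by zero into $B$, then yields $v^{(h)}(t,\cdot)\equiv 0$ on $B^c$, and taking the boundary trace on $S$ produces $h\equiv 0$.

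The main obstacle is the careful justification of the one-dimensional wave equation for $V$ on the exterior strip $p>1$: one must show the tangential-Laplacian integral truly vanishes (which requires smoothness of $v^{(h)}$ and the compact support of its slices on $\Pi$), and separate this cleanly from the limiting case $p=1$ (tangent plane), where the naive zero-extension of $v^{(h)}$ across $S$ introduces distributional jumps. Once the interior equation on $|p|>1$ is established, the combination of the quarter-plane d'Alembert formula with Helgason's support theorem is routine.
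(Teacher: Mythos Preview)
Your proof is correct and essentially matches the paper's: necessity is identical, and for sufficiency both arguments pass from $F$ to $f$ via Theorem~\ref{T:radon-range}, then compare exterior Radon projections and invoke injectivity of $\mathcal{R}^E$. Your reorganization as an injectivity lemma for the map $g\mapsto F$, with the one-dimensional d'Alembert propagation on the strip $\{p>1\}$ spelled out, fills in exactly the step the paper compresses into the single clause ``This implies that $[\mathcal{R}^{E}u](t,\omega,p)=[\mathcal{R}^{E}v^{(g)}](t,\omega,p)$''; the only minor caveat is that your zero-extension of $v^{(h)}(t,\cdot)$ into $B$ is discontinuous across $S$ (it jumps by $h(t,\cdot)$), so Helgason's support theorem should be invoked in its distributional form, or you can simply cite injectivity of $\mathcal{R}^E$ on compactly supported functions in $B^c$ as the paper does.
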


\begin{proof}
\textbf{Necessity of conditions (1)-(4)}. Smoothness and support of $g$
(condition (1)) follow from the well known properties of solutions of the
initial value problem for the wave equation in $\mathbb{R}^{d}$ with initial
conditions in $C_{0}^{\infty}(\mathbb{R}^{d})$. On the other hand, function
$F(\omega,p)$ defined by (\ref{E:Rvg}) is the Radon transform of $f(x)\in
C_{0}^{\infty}(B)$ and, therefore, satisfies conditions (1)-(3) of Theorem
\ref{T:radon-range}, thus proving statements(2)-(4).

\textbf{Sufficiency of conditions (1)-(4)}. If conditions (2)-(4) are
satisfied, then according to Theorem \ref{T:radon-range} there exists function
$f\in C_{0}^{\infty}(B)$ such that $F(\omega,p)=\mathcal{R}f$. \ One can then
solve the Cauchy problem (\ref{E:originalwave}), (\ref{E:originalBC}) thus
obtaining solution $u(t,x)$ in $(0,1]\times\mathbb{R}^{d}$. The Radon
transform $\mathcal{R}u$ of such a solution is given by formula
(\ref{E:Radon_of_wave}), for each fixed $t\in(0,1]$ and each $\omega\in S.$
This implies that $[\mathcal{R}^{E}u](t,\omega,p)=[\mathcal{R}^{E}%
v^{(g)}](t,\omega,p)$ for all $t\in(0,1],$ $\omega\in S$, and $p\in
\lbrack1,2].$ Since $\mathcal{R}^{E}$ is uniquely invertible, $u(t,x)=v^{(g)}%
(t,x)$ for $t\in(0,1],$ $x\in B^{c}$, and, in particular, $u(t,x)=v^{(g)}%
(t,x)=g(t,x)$ for all $x\in S,$ $t=(0,1],$ implying $g=\mathcal{A}f$.
\end{proof}

\begin{remark}\indent
\begin{enumerate}
\item When considering sufficiency of condition (3) in the above Theorem, one
can notice that $F(\omega,p)$ defined by (\ref{E:Rvg}) is automatically a
$C_{0}^{\infty}$-function on $(-1,0]\times S$ and on $[0,1)\times S$, due to
the smoothness of $v^{(g)}$. Only the continuity of all the derivatives of
$F(\omega,p)$ at $p=0$ needs to be verified. This condition is non-trivial,
since function $F$ is constructed of two "halves" (see equation (\ref{E:Rvg}%
)). This condition is essential, since without it one cannot guarantee
smoothness of the candidate function $f(x)$ at $x=0$.

\item This range description is implicit, since it requires
solving the auxiliary exterior IBVP with the given data $g(x,t)$. The
remaining part of this paper is dedicated to expressing the range conditions
in terms of the Fourier coefficients of the candidate function $b(t,\theta)$.
\end{enumerate}
\end{remark}



\section{A more explicit range characterization}\label{S:explicit}

As mentioned above, Theorem \ref{T:range implicit} provides a
rather implicit range description, which requires, first, solving the exterior
IBVP and then finding its Radon projections. The spherical symmetry of our
domain $\mathbb{R}^{d}\setminus\overline{B}$ will allow us to use standard
separation of variables technique to solve the exterior problem and obtain the
desired range conditions in terms of Fourier coefficients of $b(t,\theta)$.
This requires some preliminary work.

\subsection{Range conditions for the Radon transform in terms of expansion in
spherical harmonics}

In order to avoid the redundancy in the Radon transform defined on
$S\times\lbrack-1,1]$, we will work with the Radon transform restricted to
$S\times\lbrack0,1]$. Let us first formulate the necessary and sufficient
conditions for a function $F(\omega,p)$ defined on $S\times\lbrack0,1]$ to be
in the range of such a restricted Radon transform, i.e.
\begin{equation}
F(\omega,p)=\left[  \mathcal{R}q\right]  (\omega,p),\text{ }\omega\in S,\text{
}p\in\lbrack0,1], \label{E:half_range}%
\end{equation}
for some $q\in C_{0}^{\infty}(B).$

Let us start with necessary conditions. Assume $F=\mathcal{R}q$. An obvious
necessary condition following from Theorem \ref{T:radon-range} is
\begin{equation}
F(\omega,p)\in\mathbb{C}_{0}^{\infty}(S\times\lbrack0,1)). \label{E:half_c0}%
\end{equation}
We need to express remaining conditions in terms of the Fourier coefficients
$F_{l}^{\mathbf{m}}$ of $F.$ Define
\begin{equation}
F_{l}^{\mathbf{m}}(p):=\int\limits_{S}F(\omega,p)Y_{l}^{\mathbf{m}}%
(\omega)d\omega=0,\mathbf{\quad}p\in\lbrack0,1],\mathbf{\quad}%
l=0,1,2,...,\mathbf{\quad}\left\vert \mathbf{m}\right\vert \leq l,
\label{E:half_harmonics}%
\end{equation}
so that
\[
F(\omega,p)=\sum_{l,\mathbf{m}}F_{l}^{\mathbf{m}}(p)Y_{l}^{\mathbf{m}}%
(\omega),\mathbf{\quad}p\in\lbrack0,1].
\]
Note that for any $\omega\in S$,
\begin{equation}[\mathcal{R}q](\omega,-p)=[\mathcal{R}%
q](-\omega,p)=F(\omega,-p) \mbox{ for }p\in\lbrack0,1].
\end{equation}
On the other hand,
\[
F(\omega,-p)=\sum_{l,\mathbf{m}}F_{l}^{\mathbf{m}}(p)Y_{l}^{\mathbf{m}%
}(-\omega)=\sum_{l,\mathbf{m}}(-1)^{l}F_{l}^{\mathbf{m}}(p)Y_{l}^{\mathbf{m}%
}(\omega),\mathbf{\quad}p\in\lbrack0,1].
\]
For any fixed $\omega\in S$, the Radon transform $[\mathcal{R}q](\omega,p)$ is
an infinitely differentiable function of $p$ on the interval $[-1,1]$.
Therefore, functions $^{\ast}F_{l}^{\mathbf{m}}(p)$ defined as%
\begin{equation}
^{\ast}F_{l}^{\mathbf{m}}(p):=\left\{
\begin{array}
[c]{rl}%
F_{l}^{\mathbf{m}}(p), & p\in\lbrack0,1]\\
(-1)^{l}F_{l}^{\mathbf{m}}(-p), & p\in\lbrack-1,0)
\end{array}
\right.  \label{E:two_halves}%
\end{equation}
must be infinitely differentiable on $[-1,1]$, and, in particular, at $p=0.$

Note that $^{\ast}F_{l}^{\mathbf{m}}(p)$ are odd functions for odd $l$, and
even functions for even $l=0,2,4,...$ . Smoothness of these functions implies
that for even $l$, all odd order right derivatives of $F_{l}^{\mathbf{m}}(p)$ at $p=0$
should vanish. For odd $l$, right derivatives of even orders
(including zero) of $F_{l}^{\mathbf{m}}(p)$ must vanish at $p=0$:%
\begin{align}
\frac{\partial^{n}}{\partial p^{n}}F_{l}^{\mathbf{m}}(0)  &  =0,\text{
}l\text{ is even,\ }\left\vert \mathbf{m}\right\vert \leq l,\text{
}n=1,3,5,...,\label{E:0even_der}\\
\frac{\partial^{n}}{\partial p^{n}}F_{l}^{\mathbf{m}}(0)  &  =0,\text{
}l\text{ is odd, \ }\left\vert \mathbf{m}\right\vert \leq l,\text{
}n=0,2,4,...\text{ }. \label{E:0odd_der}%
\end{align}
The moment conditions (\ref{E:standard_moment}) can be expressed in terms of
coefficients $^{\ast}F_{l}^{\mathbf{m}}$ as follows:

\begin{equation}
\int\limits_{-1}^{1}~^{\ast}F_{l}^{\mathbf{m}}(p)p^{n}%
~dp=0,~n=0,1,2,...,,\text{ if }l>n,\text{ even }\left\vert \mathbf{m}\right\vert
\leq l, \label{E:half_moment}%
\end{equation}
or, in terms of $F_{l}^{\mathbf{m}}$
\begin{equation}
\int\limits_{0}^{1}F_{l}^{\mathbf{m}}(p)p^{n}~dp=0,~n=0,1,2,...,,\text{ if
}l>n,\text{ }l+n\text{ even, }\left\vert \mathbf{m}\right\vert \leq l.
\label{E:half_momentnew}%
\end{equation}
Conditions (\ref{E:half_c0}), (\ref{E:0even_der}), (\ref{E:0odd_der}) and
(\ref{E:half_momentnew}) are necessary for a function $F(\omega,p)$ defined on
$S\times\lbrack0,1]$ to be in the range of the Radon transform restricted to
$S\times\lbrack0,1]$. These conditions are also sufficient, as proven in the
theorem below.

\begin{theorem}
\label{P:Radon} Suppose function $F(\omega,p)$ defined on $S\times\lbrack0,1]
$ satisfies conditions (\ref{E:half_c0}), (\ref{E:0even_der}),
(\ref{E:0odd_der}) and (\ref{E:half_momentnew}) with coefficients
$F_{l}^{\mathbf{m}}(p)$ defined by equation (\ref{E:half_harmonics}). Then,
there exists a function $q\in C_{0}^{\infty}(B)$ such that representation
(\ref{E:half_range}) holds.
\end{theorem}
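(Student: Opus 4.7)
The plan is to reduce Theorem \ref{P:Radon} to the standard range theorem \ref{T:radon-range} by extending $F$ from $S \times [0,1]$ to a function $\widetilde{F}$ on $S \times [-1,1]$ that respects the Radon symmetry. Explicitly, I would set
\[
\widetilde{F}(\omega,p) := \begin{cases} F(\omega,p), & p \in [0,1], \\ F(-\omega,-p), & p \in [-1,0), \end{cases}
\]
which agrees componentwise with the extension ${}^{*}F_l^{\mathbf{m}}$ from (\ref{E:two_halves}). By design $\widetilde{F}(\omega,p) = \widetilde{F}(-\omega,-p)$, so the symmetry hypothesis of Theorem \ref{T:radon-range} is automatic, and (\ref{E:half_c0}) immediately gives $\widetilde{F} \in C^{\infty}$ on $S\times\bigl((-1,0)\cup(0,1)\bigr)$ with compact support in $p$ strictly inside $(-1,1)$.

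The main obstacle will be verifying $C^\infty$-smoothness of $\widetilde{F}$ across $p=0$; this is precisely why hypotheses (\ref{E:0even_der})--(\ref{E:0odd_der}) are imposed. Differentiating the defining formula, I expect to obtain
\[
\partial_p^n \widetilde{F}(\omega,0^-) = (-1)^n \partial_p^n F(-\omega,0), \qquad \partial_p^n \widetilde{F}(\omega,0^+) = \partial_p^n F(\omega,0),
\]
so smooth matching at $p=0$ is equivalent to $\partial_p^n F(\omega,0) = (-1)^n \partial_p^n F(-\omega,0)$ for all $n\geq 0$. Expanding in spherical harmonics and using $Y_l^{\mathbf{m}}(-\omega) = (-1)^l Y_l^{\mathbf{m}}(\omega)$, this collapses to the single requirement $\partial_p^n F_l^{\mathbf{m}}(0) = 0$ whenever $n+l$ is odd, which is exactly the union of (\ref{E:0even_der}) and (\ref{E:0odd_der}).

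The final step will be translating the moment conditions (\ref{E:standard_moment}) for $\widetilde{F}$ into (\ref{E:half_momentnew}). Splitting the integral at $p=0$, substituting $p\mapsto -p$ on the negative half, and using the spherical-harmonic expansion, I expect to find
\[
\int_{-1}^{1} \widetilde{F}(\omega,p)\, p^n \, dp = 2 \sum_{\substack{l,\mathbf{m} \\ l+n \text{ even}}} \left( \int_0^1 F_l^{\mathbf{m}}(p) p^n \, dp \right) Y_l^{\mathbf{m}}(\omega),
\]
since the pairs with $l+n$ odd drop out because the corresponding integrand on $[-1,1]$ is then odd in $p$. Requiring the left-hand side to be the restriction of a degree-$n$ homogeneous polynomial is exactly the vanishing of all coefficients with $l > n$, which is (\ref{E:half_momentnew}). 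With all three hypotheses of Theorem \ref{T:radon-range} verified for $\widetilde{F}$, that theorem will produce $q\in C_0^\infty(B)$ with $\mathcal{R}q=\widetilde{F}$, and restricting to $p\in[0,1]$ delivers (\ref{E:half_range}). The only step requiring real care is the smoothness across $p=0$; the symmetry, support, and moment conditions are essentially formal once the extension is in place.
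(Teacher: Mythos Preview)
Your proposal is correct and follows essentially the same approach as the paper: extend $F$ to $S\times[-1,1]$ via the Radon symmetry, use (\ref{E:0even_der})--(\ref{E:0odd_der}) to verify smoothness across $p=0$, check that (\ref{E:half_momentnew}) yields the full moment conditions, and then invoke Theorem~\ref{T:radon-range}. If anything, your treatment of the moment-condition step is more explicit than the paper's, which simply asserts it via the identification of the Fourier coefficients of the extension with ${}^{*}F_l^{\mathbf{m}}$.
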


\begin{proof}
Let us define function $^{\ast}F(\omega,p)$ as follows
\[
^{\ast}F(\omega,p)=\left\{
\begin{array}
[c]{rl}%
F(\omega,p), & p\in\lbrack0,1]\\
F(-\omega,-p), & p\in\lbrack-1,0)
\end{array}
\right.  ,\text{ }\omega\in S.
\]
The Fourier coefficients of $^{\ast}F(\omega,p)$ are given by equation
(\ref{E:two_halves}). Therefore, $^{\ast}F(\omega,p)$ satisfies the moment
conditions (\ref{E:standard_moment}) and the symmetry condition $^{\ast
}F(\omega,p)=F(-\omega,-p).$ Moreover, $^{\ast}F(\omega,p)$ is a $C^{\infty}$
function on each of the two halves $S\times(0,1]$ and $S\times\lbrack-1,0)$ of
the cylinder $S\times\lbrack-1,1]$, i.e.
\begin{equation}
^{\ast}F(\omega,p)\in\mathbb{C}_{0}^{\infty}(S\times\lbrack0,1))
\mbox{ and }^{\ast}F(\omega,p)\in\mathbb{C}_{0}^{\infty}(S\times(-1,0]).
\end{equation}
Due to conditions (\ref{E:0even_der}) and (\ref{E:0odd_der}), each of the
coefficients $^{\ast}F_{l}^{\mathbf{m}}(p)$ is a $C_{0}^{\infty}((-1,1))$
function. Thus, for a fixed $\omega$, $^{\ast}F(\omega,p)$ is $C_{0}^{\infty
}((-1,1))$ function in $p.$ Moreover, since all the mixed derivatives of
$F(\omega,p)$ are well defined at $p=0$ for each value of $\omega,$ the limits

\[
\lim_{p\rightarrow0^{\pm}}\,^{\ast}D^{k}F(\omega,p)
\]
of any derivative $D^{k}F$ are well defined and coincide with $D^{k}F(\omega,0).$ It follows that
$^{\ast}F(\omega,p)$ is $C_{0}^{\infty}(S\times(-1,1)),$ and, due to Theorem
\ref{T:radon-range}, $^{\ast}F(\omega,p)$ is in the range of the Radon
transform. In other words, there exists function $q\in C_{0}^{\infty}(B)$ such
that $^{\ast}F(\omega,p)=[\mathcal{R}q](\omega,p).$ Therefore, $F(\omega
,p)=[\mathcal{R}q](\omega,p)$ for $\omega\in S$ and $p\in\lbrack0,1].$
\end{proof}


\subsection{Solving the exterior problem using separation of variables}

\subsubsection{Fourier transform of $b(t,\theta)$ in time}

\phantom{a} \smallskip

\label{SS:FT}

According to the well known Borel extension theorem (see, e.g.
\cite{Wasow}), function $b(t,\theta)$ can be extended (non-uniquely) to $b\in
C_{0}^{\infty}(0,T)$ for any $T>1$. Let us fix such an extension. (Even
though such an extension is arbitrary, we will prove later on that the values
of the extension for $t>1$ do not enter the final formulas).

Further, we
extend $b(t,\theta)$ by zero to all of $\mathbb{R}$ in $t$. Abusing the
notation, we will still call the extended function $b(t,\theta)$.

Such an extension allows us to define the Fourier transform of $b$ with
respect to the time variable $t$:%
\begin{align*}
\hat{b}(\lambda,\theta)  &  :=[\mathcal{F}b](\lambda,\theta):=\int%
\limits_{\mathbb{R}}b(t,\theta)e^{i\lambda t}dt,\\
b(t,\theta)  &  =[\mathcal{F}^{-1}\hat{b}](t,\theta):=\frac{1}{2\pi}%
\int\limits_{\mathbb{R}}\hat{b}(\lambda,\theta)e^{-i\lambda t}d\lambda.
\end{align*}

We proceed by expanding $b(t,\theta)$ and $\hat{b}(\lambda,\theta)$ into
spherical harmonics:
\begin{align}
b(t,\theta)  &  =\sum_{l,\mathbf{m}}b_{l}^{\mathbf{m}}(t)Y_{l}^{\mathbf{m}%
}(\theta),\qquad b_{l}^{\mathbf{m}}(t)=\int\limits_{S}b(t,\theta
)Y_{l}^{\mathbf{m}}(\theta)d\theta,\label{E:b_harm}\\
\hat{b}(\lambda,\theta)  &  =\sum_{l,\mathbf{m}}\hat{b}_{l}^{\mathbf{m}%
}(\lambda)Y_{l}^{\mathbf{m}}(\theta),\qquad\hat{b}_{l}^{\mathbf{m}}%
(\lambda)=\int\limits_{S}\hat{b}(\lambda,\theta)Y_{l}^{\mathbf{m}}%
(\theta)d\theta. \label{E:b_hat_harm}%
\end{align}


\subsubsection{Solution obtained by expansion in spherical harmonics}
\label{S:range}

Our goal is to find the conditions for a candidate function $b(t,\theta)$ to
be in the range of the wave operator $\mathcal{A}$ in terms of the spherical
harmonic expansion coefficients of $b$, rather than in terms of $v^{b}(t,x)$.
To do so, we need to be able to express the exterior Radon transform of the
solution $v^{(b)}$ to the exterior problem (\ref{E:exterior_problem}), in
terms of coefficients $b_{l}^{\mathbf{m}}(t)$ of the spherical harmonic
expansion of $b(t,\theta)$ (see equation (\ref{E:b_harm})).

This will require the use of certain special functions. The \textbf{spherical
Hankel function} $h_{l}^{d}(\lambda)$ is defined through the standard
\textbf{Hankel function} $H_{l+d/2-1}^{(1)}(\lambda)$ of order $l+d/2-1$ by
the following formulas:
\begin{align*}
h_{l}^{d}(\lambda) =\frac{H_{l+d/2-1}^{(1)}(\lambda)}{\lambda^{d/2-1}},
l=0,1,2,..., \lambda\geq0,\\
\\
h_{l}^{d}(-\lambda) =\overline{h_{l}^{d}(\lambda)}, l=0,1,2,...,\lambda>0
\qquad
\end{align*}
(see \cite{OlverNIST10} for details.)

The desired connection between $\mathcal{R}^{E}v^{(b)}$ and $b_{l}%
^{\mathbf{m}}$ is given by the following theorem.

\begin{theorem}
\label{T:series}Let $v^{(b)}$ be the solution to the exterior problem
(\ref{E:exterior_problem}) with boundary condition $b(t,\theta)\in C_{0}%
(Z_{1})$, and $[\mathcal{R}^{E}v^{(b)}](t,\omega,p)$ be the exterior Radon
transform of $v^{(b)}$. Then $\mathcal{R}^{E}v^{(b)}$ can be expressed as the
following series%
\begin{equation}
\lbrack\mathcal{R}^{E}v^{(b)}](t,\omega,p)=\sum_{l,\mathbf{m}}R_{l}%
^{\mathbf{m}}(t-p)Y_{l}^{\mathbf{m}}(\omega), \label{E:Rml}%
\end{equation}
with functions $R_{l}^{\mathbf{m}}$ expressed through the Fourier coefficients
$\hat{b}_{l}^{\mathbf{m}}(\lambda)$ (see equation (\ref{E:b_hat_harm})) as
follows:
\begin{equation}
R_{l}^{\mathbf{m}}(t)=-2^{^{\frac{d}{2}}}\pi^{^{\frac{d}{2}-1}}i^{l}\left[
\mathcal{F}^{-1}\left(  \frac{\hat{b}_{l}^{\mathbf{m}}(\lambda)}{\lambda
^{d-1}h_{l}^{d}(\lambda)}\right)  \right]  (t). \label{E:Fourier_Rml}%
\end{equation}
Alternatively, functions $R_{l}^{\mathbf{m}}$ can be represented as
convolutions
\begin{equation}
R_{l}^{\mathbf{m}}(t)=(b_{l}^{\mathbf{m}}\ast K_{l})(t)=\int\limits_{0}%
^{1+t}b_{l}^{\mathbf{m}}(s)K_{l}(t-s)ds, \label{E:varlimits}%
\end{equation}
with convolution kernels $K_{l}(t)$ defined through the inverse Fourier transform as
follows%
\begin{equation}
K_{l}(t)=-\left[  \mathcal{F}^{-1}\left(  \frac{2^{^{\frac{d}{2}}}\pi
^{^{\frac{d}{2}-1}}i^{l}}{\lambda^{d-1}h_{l}^{d}(\lambda)}\right)  \right]
(t). \label{E:convolution_kernels}%
\end{equation}

\end{theorem}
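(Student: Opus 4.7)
The plan is to apply the temporal Fourier transform to (\ref{E:exterior_problem}), converting the wave equation into a Helmholtz problem in $B^{c}$, and then separate variables in spherical coordinates. Because $v^{(b)}$ has zero initial data, Fourier transforming in $t$ produces $(\Delta+\lambda^{2})\hat{v}(\lambda,x)=0$ in $B^{c}$ with Dirichlet datum $\hat{v}(\lambda,\theta)=\hat{b}(\lambda,\theta)$ on $S$. Finite-speed-of-propagation of $v^{(b)}$ (whose support lies in $\{|x|\le 1+t\}$) translates into the Sommerfeld outgoing radiation condition at infinity. Expanding $\hat{v}(\lambda,r\theta)=\sum_{l,\mathbf{m}}\hat{v}_{l}^{\mathbf{m}}(\lambda,r)Y_{l}^{\mathbf{m}}(\theta)$, the radial Bessel equation for each mode has, among its two independent solutions, exactly one outgoing solution, proportional to $h_{l}^{d}(\lambda r)$; matching at $r=1$ gives
\[
\hat{v}_{l}^{\mathbf{m}}(\lambda,r)=\hat{b}_{l}^{\mathbf{m}}(\lambda)\,\frac{h_{l}^{d}(\lambda r)}{h_{l}^{d}(\lambda)}.
\]

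The technical heart of the argument is computing the exterior Radon transform of each mode $h_{l}^{d}(\lambda r)Y_{l}^{\mathbf{m}}(\theta)$ on planes $\Pi(\omega,p)$ with $p>1$. Denote this plane integral by $I_{l}^{\mathbf{m}}(\omega,p,\lambda)$. Since $h_{l}^{d}(\lambda r)Y_{l}^{\mathbf{m}}$ solves Helmholtz in $\mathbb{R}^{d}\setminus\{0\}$ and $\Pi(\omega,p)$ misses the origin for $p>0$, tangential integration by parts on the plane (justified via a limiting-absorption argument $\lambda\mapsto\lambda+i\varepsilon$, using the exponential decay of $h_{l}^{d}$ off the real axis and its $r^{-(d-1)/2}$ decay on it) shows that $I_{l}^{\mathbf{m}}$ satisfies $(\partial_{p}^{2}+\lambda^{2})I_{l}^{\mathbf{m}}=0$ for $p>0$. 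The outgoing condition selects the $e^{i\lambda p}$ branch, and rotational covariance forces the $\omega$-dependence to enter as $Y_{l}^{\mathbf{m}}(\omega)$ alone, so
\[
I_{l}^{\mathbf{m}}(\omega,p,\lambda)=\alpha_{l}(\lambda)\,Y_{l}^{\mathbf{m}}(\omega)\,e^{i\lambda p},\qquad p>1,
\]
with a scalar $\alpha_{l}(\lambda)$ independent of $\mathbf{m}$. The main obstacle will be pinning down the explicit value $\alpha_{l}(\lambda)=-2^{d/2}\pi^{d/2-1}i^{l}/\lambda^{d-1}$. I would do this either by evaluating the $l=0$ case directly (the radial plane integral of $h_{0}^{d}(\lambda|x|)$ reduces via a standard Bessel identity for $H^{(1)}_{d/2-1}$ to the claimed expression) and bootstrapping to higher $l$ using the transformation of spherical harmonics under the rotation that aligns $\omega$ with a fixed axis, or alternatively by matching the $p\to\infty$ asymptotics of $I_{l}^{\mathbf{m}}$ against the known large-argument asymptotics of $h_{l}^{d}$.

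Once $\alpha_{l}(\lambda)$ is identified, combining the previous steps gives
\[
\mathcal{F}_{t}[\mathcal{R}^{E}v^{(b)}](\lambda,\omega,p)=\sum_{l,\mathbf{m}}\frac{\hat{b}_{l}^{\mathbf{m}}(\lambda)\,\alpha_{l}(\lambda)}{h_{l}^{d}(\lambda)}\,e^{i\lambda p}\,Y_{l}^{\mathbf{m}}(\omega),
\]
and inverting the Fourier transform in $\lambda$ (noting that multiplication by $e^{i\lambda p}$ is translation by $p$ in $t$) immediately yields (\ref{E:Rml}) with $R_{l}^{\mathbf{m}}$ given by (\ref{E:Fourier_Rml}). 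The convolution form (\ref{E:varlimits}) follows by factoring the Fourier multiplier as $\hat{b}_{l}^{\mathbf{m}}(\lambda)\cdot\bigl[\alpha_{l}(\lambda)/h_{l}^{d}(\lambda)\bigr]$ and applying the convolution theorem, with $K_{l}$ as in (\ref{E:convolution_kernels}). The variable upper limit $1+t$ comes from support analysis: the outgoing character of the kernel $1/(\lambda^{d-1}h_{l}^{d}(\lambda))$ forces $K_{l}$ to be supported in $[-1,\infty)$ (equivalently, the finite-propagation support of $v^{(b)}$ forces $\mathcal{R}^{E}v^{(b)}(t,\omega,p)=0$ for $p>1+t$), so $K_{l}(t-s)=0$ when $s>1+t$; combined with $b_{l}^{\mathbf{m}}(s)=0$ for $s\le 0$, this truncates the full convolution to $\int_{0}^{1+t}$.
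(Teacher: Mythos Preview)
Your overall strategy is sound and arrives at the same formulas, but it is genuinely different from the paper's argument at the technical core. The paper explicitly points out that for $d>2$ the plane integrals of $h_{l}^{d}(\lambda|x|)Y_{l}^{\mathbf{m}}(\hat{x})$ diverge, and rather than regularizing them it \emph{avoids} computing $\mathcal{R}^{E}$ of each Helmholtz mode altogether. Instead, it applies the Kirchhoff representation to $v_{t}^{(b)}$, which expresses $[\mathcal{R}^{E}v_{t}^{(b)}](t,\omega,p)$ as two integrals over the bounded sphere~$S$ involving $v_{t}^{(b)}|_{S}$ and $\partial_{n}v^{(b)}|_{S}$. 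Plugging in the separated-variables expressions for these traces, the Jacobi--Anger identity $\int_{S}e^{-i\lambda\omega\cdot\theta}Y_{l}^{\mathbf{m}}(\theta)\,d\theta=(2\pi)^{d/2}i^{l}Y_{l}^{\mathbf{m}}(\omega)j_{l}^{d}(\lambda)$ reduces everything to a single ratio, and the constant you call $\alpha_{l}(\lambda)$ falls out of the Wronskian identity $\mathcal{W}(J_{\mu},H^{(1)}_{\mu})(\lambda)=2i/(\pi\lambda)$, applied with $\mu=l+d/2-1$. One time-integration then gives (\ref{E:Fourier_Rml}). This is an entirely elementary route that never needs limiting absorption, and it produces the explicit constant in one line.

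Your limiting-absorption approach is legitimate, and the Schur-type argument for the $\omega$-dependence is correct. One small wrinkle: your proposed ``bootstrap to higher $l$ via the rotation aligning $\omega$ with a fixed axis'' cannot determine $\alpha_{l}$ from $\alpha_{0}$, since rotations preserve the degree~$l$; that suggestion should be dropped in favor of your second option (matching the large-$p$ stationary-phase asymptotics of $I_{l}^{\mathbf{m}}$ against the known asymptotics $h_{l}^{d}(\lambda r)\sim\mathrm{const}\cdot i^{-l}e^{i\lambda r}r^{-(d-1)/2}$), which does yield the $l$-dependent phase $i^{l}$ and the power $\lambda^{-(d-1)}$. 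Your derivation of the truncated convolution limits via $\operatorname{supp}K_{l}\subset[-1,\infty)$ from finite propagation speed is exactly what the paper does in its final proposition.
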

We postpone the lengthy proof of this Theorem till Section \ref{S:proofs}.

\begin{remark}The above theorem, by solving explicitly the exterior problem,
relates the Radon transform of $v^{(b)}$ directly to the Fourier coefficients
of $b$. Below we use these formulas at the value of ${t=1}$ and values of $p$
lying in the interval $[1,2]$. Then, integration in the
equation~(\ref{E:varlimits}) requires only the values of $b_{l}^{\mathbf{m}%
}(s)$ in the interval $s\in\lbrack0,1]$. Thus, our choice of the arbitrary
smooth extension of $b(t,\theta$) to times $t>1$ does not affect functions
$R_{l}^{\mathbf{m}}(t)$ that are being computed.
\end{remark}


\section{Range theorem in terms of spherical harmonics expansion}\label{S:range_expansions}


\begin{theorem}
\label{T:main} A function $b(t,\theta)\in C_{0}^{\infty}(Z_{1})$ is in the
range of $\mathcal{A}$, i.e. can be expressed as $b=\mathcal{A}f$ if and only
if the following two sets of conditions are satisfied:

\begin{enumerate}
\item \textbf{moment conditions}%
\[
\int\limits_{0}^{1}R_{l}^{\mathbf{m}}(p)p^{n}~dp=0,~n=0,1,2,...,,\text{ if
}l>n,\text{ l+n is even, }\left\vert \mathbf{m}\right\vert \leq l.
\]

\item \textbf{``smoothness'' conditions} at $p=0$%
\begin{align*}
\frac{\partial^{n}}{\partial p^{n}}R_{l}^{\mathbf{m}}(0)  &  =0,\text{
}l\text{ is even,\ }\left\vert \mathbf{m}\right\vert \leq l,\text{
}n=1,3,5,...,\\
\frac{\partial^{n}}{\partial p^{n}}R_{l}^{\mathbf{m}}(0)  &  =0,\text{
}l\text{ is odd, \ }\left\vert \mathbf{m}\right\vert \leq l,\text{
}n=0,2,4,...\text{ }.
\end{align*}
where functions $R_{l}^{\mathbf{m}}(t)$, $l=0,1,2,...,$ $|\mathbf{m}|\leq l,$
are defined by (\ref{E:Rml}).
\end{enumerate}

Moreover, if conditions (1) and (2) are satisfied, the Radon projections
$F(\omega,p)$ of $f(x)$ are given explicitly by the formulas%
\[
F(\omega,p)=\sum_{l,\mathbf{m}}R_{l}^{\mathbf{m}}(p)Y_{l}^{\mathbf{m}}%
(\omega),\ F(\omega,-p)=F(-\omega,p),\ p\in\lbrack0,1],\ \omega\in S,
\]
and so $f$ can be reconstructed from $F$ by inverting the Radon transform.
\end{theorem}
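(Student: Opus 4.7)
The plan is to combine the three preceding theorems: the implicit range characterization (Theorem~\ref{T:range implicit}), the half-range Radon range description (Theorem~\ref{P:Radon}), and the explicit solution formula for the exterior problem (Theorem~\ref{T:series}). Theorem~\ref{T:range implicit} reduces the question ``is $b$ in the range of $\mathcal{A}$?'' to ``does the function $F(\omega,p)$ built from $v^{(b)}$ via (\ref{E:Rvg}) lie in the full-range Radon range?''. Because (\ref{E:Rvg}) forces the symmetry $F(\omega,-p)=F(-\omega,p)$, the full-range Radon conditions are equivalent to the half-range ones for $F|_{S\times[0,1]}$, i.e.\ the conditions (\ref{E:0even_der})-(\ref{E:0odd_der}) and (\ref{E:half_momentnew}) on the coefficients $F_l^{\mathbf{m}}$ supplied by Theorem~\ref{P:Radon}.

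The key identification is that, by Theorem~\ref{T:series} applied at $t=1$ and $p+1\in[1,2]$, the Fourier coefficients $F_l^{\mathbf{m}}(p)$ on $[0,1]$ coincide with the functions $R_l^{\mathbf{m}}(p)$ given by (\ref{E:Fourier_Rml})---the normalization constants in (\ref{E:Fourier_Rml}) are chosen precisely so that this identification holds. Under this identification, condition (1) of Theorem~\ref{T:main} is condition (\ref{E:half_momentnew}) and condition (2) is the pair (\ref{E:0even_der})-(\ref{E:0odd_der}). \emph{Necessity} then follows immediately: if $b=\mathcal{A}f$, then $F$ is the Radon transform of $f\in C_0^\infty(B)$, so $F$ automatically satisfies the half-range Radon conditions. \emph{Sufficiency} goes in reverse: given conditions (1)-(2), define $F(\omega,p)$ on $[0,1]$ by the reconstruction formula stated at the end of the theorem; Theorem~\ref{P:Radon} then yields $f\in C_0^\infty(B)$ with $\mathcal{R}f=F$ on $S\times[0,1]$, and Theorem~\ref{T:range implicit} applied in the forward direction to this $f$ gives $b=\mathcal{A}f$.

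The main obstacle is regularity: to apply Theorem~\ref{P:Radon} one needs $F|_{S\times[0,1]}\in C_0^\infty(S\times[0,1))$. Smoothness away from $p=0$ reduces to showing that the series $\sum_{l,\mathbf{m}} R_l^{\mathbf{m}}(p)Y_l^{\mathbf{m}}(\omega)$ and each of its mixed derivatives converge uniformly, which requires suitable $l$-decay of $R_l^{\mathbf{m}}$ and its derivatives. This decay is supplied by the convolution representation (\ref{E:varlimits}) together with the large-$l$ asymptotics of the spherical Hankel functions $h_l^d$, and is a natural byproduct of the proof of Theorem~\ref{T:series} to be carried out in Section~\ref{S:proofs}. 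The smoothness at the distinguished point $p=0$ is exactly what condition (2) of Theorem~\ref{T:main} encodes, via the even/odd parity of the spherical harmonics---this is the non-trivial content already highlighted in the remark following Theorem~\ref{T:range implicit}.
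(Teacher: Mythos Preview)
Your proposal is correct and matches the paper's proof: define $F(\omega,p)=[\mathcal{R}^{E}v^{(b)}](1,\omega,p+1)$, use Theorem~\ref{T:series} to identify its spherical-harmonic coefficients with the $R_l^{\mathbf{m}}$, and then invoke Theorem~\ref{P:Radon} (which, together with Theorem~\ref{T:range implicit}, links the half-range Radon conditions to membership in the range of $\mathcal{A}$). One simplification relative to your write-up: the condition $F\in C_0^\infty(S\times[0,1))$ comes for free from the smoothness of the exterior solution $v^{(b)}$ with $C_0^\infty$ boundary data, so the term-by-term series estimates you flag as the ``main obstacle'' are not actually needed.
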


\begin{proof}
Define function $F(\omega,p)$ for $p\in\lbrack0,1]$,\ $\omega\in S$, as
follows
\[
F(\omega,p)=[\mathcal{R}^{E}v^{(b)}](1,\omega,p+1)=\sum_{l,\mathbf{m}}%
R_{l}^{\mathbf{m}}(p)Y_{l}^{\mathbf{m}}(\omega).
\]
The coefficients of the spherical harmonic expansion of $F(\omega,p)$ are%
\[
F_{l}^{\mathbf{m}}(p)=\int\limits_{S}F(\omega,p)Y_{l}^{\mathbf{m}}%
(\omega)d\omega=R_{l}^{\mathbf{m}}(p),
\]
with functions $R_{l}^{\mathbf{m}}$ defined by equations (\ref{E:varlimits})
and (\ref{E:convolution_kernels}). Proposition~\ref{P:Radon} yields the
necessary and sufficient conditions on $F(\omega,p)$ and $F_{l}^{\mathbf{m}%
}(p)$ for $b$ being in the range of $\mathcal{A}$. These conditions coincide
with the moment and smoothness conditions of the theorem. This completes the proof.
\end{proof}


\section{Proof of Theorem \ref{T:series}}
\label{S:proofs}

We need to prepare several preliminary results before Theorem \ref{T:series}
can be proven.

First, let us use separation of variables to obtain explicit expression for
the solution $v^{(b)}(t,x)$ to the exterior problem (\ref{E:exterior_problem}%
). We will also need the time derivative $v_{t}^{(b)}(t,x)$ of $v^{(b)}(t,x),
$ and the expression for the normal derivative $\frac{\partial}{\partial
n}v^{(b)}(t,\theta)$, where $\theta\in S.$

\begin{proposition}
For a given condition $b(t,\theta),$ solution $v^{(b)}(t,x)$ to the exterior
problem (\ref{E:exterior_problem}) and its derivatives are given by the
following formulas:%
\begin{align}
v^{(b)}(t,x)  &  =\frac{1}{2\pi}\int\limits_{\mathbb{R}}\left(  \sum
_{l,\mathbf{m}}\hat{b}_{l}^{\mathbf{m}}(\lambda)\frac{h_{l}^{d}(\lambda
|x|)}{h_{l}^{d}(\lambda)}Y_{l}^{\mathbf{m}}(\hat{x})\right)  e^{-i\lambda
t}d\lambda,\label{E:ext_sol}\\
v_{t}^{(b)}(t,x)  &  =-\frac{1}{2\pi}\int\limits_{\mathbb{R}}\left(
i\lambda\sum_{l,\mathbf{m}}\hat{b}_{l}^{\mathbf{m}}(\lambda)\frac{h_{l}%
^{d}(\lambda|x|)}{h_{l}^{d}(\lambda)}Y_{l}^{\mathbf{m}}(\hat{x})\right)
e^{-i\lambda t}d\lambda,\label{E:ext_tder}\\
\frac{\partial}{\partial n}v^{(b)}(t,\theta)  &  =\frac{1}{2\pi}%
\int\limits_{\mathbb{R}}\left(  \lambda\sum_{l,\mathbf{m}}\hat{b}%
_{l}^{\mathbf{m}}(\lambda)\frac{(h_{l}^{d}(\lambda))^{\prime}}{h_{l}%
^{d}(\lambda)}Y_{l}^{\mathbf{m}}(\theta)\right)  e^{-i\lambda t}d\lambda.
\label{E:ext_nder}%
\end{align}

\end{proposition}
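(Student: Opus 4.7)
The plan is to derive the three formulas by combining a Fourier transform in $t$ with separation of variables in the exterior of the ball. With $b$ extended smoothly to $C_0^{\infty}((0,T))$ as in Section~\ref{SS:FT} and then by zero to all of $\mathbb{R}$, its time Fourier transform $\hat b(\lambda,\theta)$ is well defined; the zero initial conditions together with the vanishing of $b$ near $t=0$ guarantee that the intended solution $v^{(b)}$ is causal. Applying $\mathcal{F}$ in $t$ to \eqref{E:exterior_problem} turns the wave equation into the Helmholtz equation
\begin{equation*}
(\Delta_x+\lambda^{2})\,\widehat{v^{(b)}}(\lambda,x)=0,\quad x\in B^{c},\qquad \widehat{v^{(b)}}(\lambda,\theta)=\hat b(\lambda,\theta),\quad \theta\in S,
\end{equation*}
supplemented by an outgoing (Sommerfeld) radiation condition that encodes causality.

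Next I would expand $\widehat{v^{(b)}}(\lambda,x)=\sum_{l,\mathbf{m}}\hat v_{l}^{\mathbf{m}}(\lambda,r)\,Y_{l}^{\mathbf{m}}(\hat x)$ with $r=|x|$. Using the polar-coordinate form of the Laplacian together with the eigenvalue identity $\Delta_{S}Y_{l}^{\mathbf{m}}=-l(l+d-2)Y_{l}^{\mathbf{m}}$, each radial coefficient satisfies the Bessel-type ODE which, under the substitution $\hat v_{l}^{\mathbf{m}}(\lambda,r)=r^{1-d/2}w(\lambda r)$, reduces to Bessel's equation of order $l+d/2-1$. The outgoing radiation condition selects the $H^{(1)}$-Hankel branch, so that $\hat v_{l}^{\mathbf{m}}(\lambda,r)=c_{l}^{\mathbf{m}}(\lambda)\,h_{l}^{d}(\lambda r)$ for $\lambda>0$, while the case $\lambda<0$ is covered by the conjugation rule $h_{l}^{d}(-\lambda)=\overline{h_{l}^{d}(\lambda)}$ built into the definition of $h_{l}^{d}$. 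Matching the Dirichlet data at $r=1$ forces $c_{l}^{\mathbf{m}}(\lambda)=\hat b_{l}^{\mathbf{m}}(\lambda)/h_{l}^{d}(\lambda)$, and inverting $\mathcal{F}$ in $\lambda$ produces \eqref{E:ext_sol}. Formulas \eqref{E:ext_tder} and \eqref{E:ext_nder} then follow by differentiating under the integral sign: the $t$-derivative brings down the factor $-i\lambda$ from $e^{-i\lambda t}$, and the outward normal derivative at $r=1$ brings down $\lambda\,(h_{l}^{d})'(\lambda)$ from $h_{l}^{d}(\lambda r)$ by the chain rule.

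The main obstacle I anticipate is to justify rigorously the interchanges of the harmonic sum, the Fourier-inversion integral, and the differentiation operators, and to confirm that the resulting series indeed reproduces the unique causal solution of \eqref{E:exterior_problem}. For this, I would exploit the rapid decay of $\hat b_{l}^{\mathbf{m}}(\lambda)$ both in $\lambda$ (from the compact support of $b$ in $t$) and in $l$ (from the smoothness of $b$ in $\theta$), combined with the asymptotics of the ratios $h_{l}^{d}(\lambda r)/h_{l}^{d}(\lambda)$ and $(h_{l}^{d})'(\lambda)/h_{l}^{d}(\lambda)$ for fixed $r\ge 1$ in both the large-$|\lambda|$ and large-$l$ regimes; these are the quantitative ingredients needed for absolute convergence on compact subsets of $B^{c}$ and for term-by-term differentiation. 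Uniqueness of the causal solution (hence the matching with $v^{(b)}$) can be argued either from the radiation condition or, equivalently, from finite speed of propagation applied to the zero initial data.
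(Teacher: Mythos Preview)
Your proposal is correct and follows essentially the same route as the paper: Fourier transform in time combined with separation of variables in spherical harmonics, selection of the outgoing Hankel branch, matching the Dirichlet data at $r=1$, and term-wise differentiation justified by the smoothness of $b$. The paper's argument is terser (it simply observes that the functions $\tfrac{h_{l}^{d}(\lambda|x|)}{h_{l}^{d}(\lambda)}e^{-i\lambda t}Y_{l}^{\mathbf{m}}(\hat x)$ are outgoing Helmholtz solutions whose boundary traces form a complete system on $\mathbb{R}\times S$), while you spell out the reduction to the Bessel ODE and the radiation condition more explicitly, but the content is the same.
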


\begin{proof}
We observe that the following combinations of functions%
\[
\frac{h_{l}^{d}(\lambda|x|)}{h_{l}^{d}(\lambda)}e^{-i\lambda t}Y_{l}%
^{\mathbf{m}}(\hat{x}),\qquad\hat{x}=x/|x|,
\]
are outgoing solutions of Helmholz equation.
With $x$ restricted to  $S$, these functions are equal to $e^{-i\lambda
t}Y_{l}^{\mathbf{m}}(x/|x|)$. Thus, they form a complete orthonormal system on $\mathbb{R}\times S$.

Therefore, the solution $v^{(b)}$ for the
exterior problem corresponding to the boundary values $b(t,\theta)$ can be
written in the form (\ref{E:ext_sol}). Due to the smoothness of the function
$b(t,\theta)$ the series converges fast (faster than any inverse power of
$|\mathbf{m}|$). This justifies the term-wise differentiation of the series,
resulting in the formulas (\ref{E:ext_tder}) and (\ref{E:ext_nder}).
\end{proof}

Our intention is to compute the exterior Radon transform of $v^{(b)}(t,x)$
given by formula (\ref{E:ext_sol}). A natural idea would be to try to
integrate equation (\ref{E:ext_sol}) at each frequency $\lambda$ term-by-term.
However, at dimensions $d$ higher than $2,$ integrals over planes of functions
$h_{l}^{d}(\lambda|x|)Y_{l}^{\mathbf{m}}(\hat{x})$ diverge, due to the slow
decrease of $h_{l}^{d}(\lambda|x|)$ at infinity. Thus, we need the following
workaround that expresses integrals of $v_{t}^{(b)}(t,x)$ over planes through
integrals over sphere $S.$ Since the sphere is bounded, the question of
convergence does not arise.

\begin{proposition}
The exterior Radon transform $\mathcal{R}^{E}v_{t}^{(b)}$ of the time
derivative $v_{t}^{(b)}$ of the exterior solution $v^{(b)}(t,x)$ can be
expressed through integrals over sphere $S$ as follows:
\begin{align}
&  2[\mathcal{R}^{E}v_{t}^{(b)}](t,\omega,p)=\label{E:sphere_integrals}\\
&  \int\limits_{S}\frac{\partial}{\partial n}v^{(b)}(\omega\cdot
\theta-p+t,\theta)d\theta-\int\limits_{S}v_{t}^{(b)}(\omega\cdot
\theta-p+t,\theta)(\theta\cdot\omega)d\theta.\nonumber
\end{align}

\end{proposition}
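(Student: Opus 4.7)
The plan is to obtain the identity as a consequence of the divergence theorem applied in the exterior domain $B^{c}$ to a carefully chosen vector field that is divergence-free thanks to the wave equation, with fluxes on $\Pi(\omega,p)\cap B^{c}$ and on $S$ producing the two sides. Introduce the retarded-time function $s(x):=t-p+x\cdot\omega$, so that $s(x)=t$ on the plane (matching the time argument on the left-hand side) and $s(\theta)=t-p+\omega\cdot\theta$ on $S$ (matching the time argument on the right-hand side), and define
\[
\mathcal{F}(x):=\nabla_{x}v^{(b)}(s(x),x)-v_{t}^{(b)}(s(x),x)\,\omega,\qquad x\in B^{c}.
\]
A direct chain-rule calculation using $\partial_{x_{j}}s=\omega_{j}$ shows that the two $\omega\cdot\nabla v_{t}$ contributions produced by differentiating through $s(x)$ in the two summands cancel pairwise, leaving $\operatorname{div}_{x}\mathcal{F}=\Delta v-v_{tt}=0$ in $B^{c}$ by the wave equation.

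I would then apply the divergence theorem on the two half-exteriors
\[
R^{\pm}:=\{x\in B^{c}\,:\,\pm(x\cdot\omega-p)>0\}.
\]
The shared plane boundary $\Pi^{\ast}:=\Pi(\omega,p)\cap B^{c}$ carries opposite outward normals $\mp\omega$ from the two sides, and the flux integrand there reads $\pm(v_{t}(t,x)-\omega\cdot\nabla v(t,x))$; the spherical pieces $S^{\pm}:=\{\theta\in S:\pm(\omega\cdot\theta-p)\geq 0\}$ together cover $S$ with common outward normal $-\theta$ (since $S$ bounds $B^{c}$ from inside), and the flux there is $-\partial_{n}v(s(\theta),\theta)+v_{t}(s(\theta),\theta)\,\omega\cdot\theta$, which is exactly the integrand appearing on the right-hand side of the identity. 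Suitably combining the two resulting divergence identities merges $S^{+}\cup S^{-}=S$ into the full sphere integral, while the $\omega\cdot\nabla v$ piece on $\Pi^{\ast}$ must be reassembled with the infinity contribution (see below) to produce the factor $2$ multiplying $[\mathcal{R}^{E}v_{t}^{(b)}]$.

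The step I expect to be the main obstacle is controlling the flux of $\mathcal{F}$ through the surface at spatial infinity. Because $s(x)=t-p+x\cdot\omega$ grows linearly with $|x|$ along the direction $\omega$, the composed field $\mathcal{F}(x)$ does not decay in that direction: $v^{(b)}$ is an outgoing wave whose wave front propagates along the characteristic $s-|x|=\text{const}$, and sampling $v^{(b)}$ along $s(x)$ keeps pace with this wave front, leaving a persistent ``characteristic tail''. Consequently the surface integral of $\mathcal{F}$ on $\{|x|=R\}\cap R^{+}$ does not vanish as $R\to\infty$, and its limit must be computed explicitly and identified with $2[\mathcal{R}^{E}v_{t}^{(b)}]$ using the Sommerfeld-type asymptotics of the outgoing Hankel functions $h_{l}^{d}(\lambda|x|)$ encoded in~(\ref{E:ext_sol}). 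As a back-up strategy, since (\ref{E:ext_sol}) gives an explicit spherical-harmonic/Fourier-in-$t$ representation of $v^{(b)}$, one can bypass the asymptotic argument and verify the identity termwise: after the Fourier transform in $t$, each $(l,\mathbf{m})$ component reduces, via the Jacobi--Anger plane-wave expansion of $e^{-i\lambda\omega\cdot\theta}$ on $S$ together with standard Bessel--Hankel Wronskian relations, to a single-line identity for $h_{l}^{d}(\lambda)$, which can be checked directly.
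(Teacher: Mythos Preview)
Your divergence-theorem strategy is a legitimate alternative to the paper's argument, but the execution has real gaps. The paper proceeds quite differently: it writes $v^{(b)}$ via the Kirchhoff boundary-integral representation in terms of the free-space Green's function $\mathbf G$, differentiates in $t$, integrates the result over $\Pi(\omega,p)$, and uses that the planar integral of $\mathbf G_t(t,\cdot-y)$ is the pair of traveling $\delta$-waves $\tfrac12[\delta(\omega\cdot y-p-t)+\delta(\omega\cdot y-p+t)]$. For $p\ge 1$ and $0\le\tau\le t$ only the second $\delta$ can fire on $S$, and integrating it in $\tau$ against $\partial_n v^{(b)}$ and $v^{(b)}$ immediately yields the two sphere integrals. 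No divergence theorem, no asymptotics.

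Concerning your plan: first, since $\mathcal R^E$ is defined for $|p|\ge1$, the plane $\Pi(\omega,p)$ misses $\bar B$, so $S^-=S$, $S^+=\emptyset$, and only $R^-$ carries a sphere boundary; your $S^\pm$ bookkeeping is moot. Second, the ``main obstacle'' you flag does not arise on $R^-$: there $x\cdot\omega<p$ forces $s(x)<t$, and finite propagation speed ($v^{(b)}(\tau,x)=0$ for $|x|>\tau+1$) confines $\operatorname{supp}\mathcal F\cap R^-$ to $\{|x|\le t+1\}$, so the flux at infinity vanishes for free. Third---and this is the genuine missing step---the plane flux of $\mathcal F$ equals $\int_{\Pi}(\omega\cdot\nabla v-v_t)=\partial_p[\mathcal R^E v]-[\mathcal R^E v_t]$, which is \emph{not} $2[\mathcal R^E v_t]$ on the nose. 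To close the argument you must invoke the outgoing relation $\partial_p[\mathcal R^E v]=-\partial_t[\mathcal R^E v]$ for $p\ge1$. This follows either from the 1D wave equation satisfied by $\mathcal R^E v$ together with zero Cauchy data and finite speed, or---staying within your framework---by applying the divergence theorem on $R^{+}$ to the companion field $\mathcal G(x)=\nabla v(s'(x),x)+v_t(s'(x),x)\,\omega$ with the \emph{opposite} retarded time $s'(x)=t+p-x\cdot\omega$: in $R^{+}$ this field again has compact support, there is no sphere boundary, and one reads off $\int_{\Pi}(\omega\cdot\nabla v+v_t)=0$ directly. With these fixes your route goes through without any Hankel asymptotics; your back-up termwise check would also work but is essentially the computation the paper carries out in the \emph{next} proposition.
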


\begin{proof}
The well known Kirchhoff representation (see, e.g., \cite{laliena} and
references therein) allows us to express $v^{(b)}$, using Green's function
$\mathbf{G}(t,x)$, through the boundary values of $v^{(b)}$ and $\frac
{\partial}{\partial n}v^{(b)}$ as follows;%
\begin{align*}
&  v^{(b)}(t,x)\\
&  =\int\limits_{S}\int\limits_{0}^{t}\left(  \frac{\partial}{\partial
n}v^{(b)}(\tau,\theta)\mathbf{G}(t-\tau,x-\theta)-v^{(b)}(\tau,\theta
)\frac{\partial}{\partial n(\theta)}\mathbf{G}(t-\tau,x-\theta)\right)  d\tau
d\theta.
\end{align*}
Correspondingly, the time derivative $v_{t}^{(b)}(t,x)$ can be represented by
the following formula:%
\begin{align*}
&  v_{t}^{(b)}(t,x)\\
&  =\int\limits_{S}\int\limits_{0}^{t}\left(  \frac{\partial}{\partial
n}v^{(b)}(\tau,\theta)\mathbf{G}_{t}(t-\tau,x-\theta)-v^{(b)}(\tau
,\theta)\frac{\partial}{\partial n(\theta)}\mathbf{G}_{t}(t-\tau
,x-\theta)\right)  d\tau d\theta,
\end{align*}
where $\mathbf{G}_{t}$ denotes the time derivative of $\mathbf{G}$. Let us
evaluate the exterior Radon transform $[\mathcal{R}^{E}v_{t}^{(b)}%
](t,\omega,p)$ of $v_{t}^{(b)}$, for $p\geq1$. For each fixed $t$ we obtain%
\begin{align*}
&  [\mathcal{R}^{E}v_{t}^{(b)}](t,\omega,p)\\
&  =\int\limits_{\Pi(\omega,p)}\left[  \int\limits_{S}\int\limits_{0}%
^{t}\left(  \frac{\partial}{\partial n}v^{(b)}(\tau,\theta)\mathbf{G}%
_{t}(t-\tau,x-\theta)\right.  \right. \\
&  \phantom{aaaaaaaa} \left.  \left.  -v^{(b)}\frac{\partial}{\partial
n(\theta)}\mathbf{G}_{t}(t-\tau,x-\theta)\right)  d\tau d\theta\right]  dx\\
&  =\int\limits_{S}\int\limits_{0}^{t}\frac{\partial}{\partial n}v^{(b)}%
(\tau,\theta)\left[  \int\limits_{\Pi(\omega,p)}\mathbf{G}_{t}(t-\tau
,x-\theta)dx\right]  d\tau d\theta\\
&  -\int\limits_{S}\int\limits_{0}^{t}v^{(b)}(\tau,\theta)\left[  \theta
\cdot\nabla_{\theta}\int\limits_{\Pi(\omega,p)}\mathbf{G}_{t}(t-\tau
,x-\theta)dx\right]  d\tau d\theta,
\end{align*}
where reversing the order of integration is justified since for any fixed $t$
function $v^{(b)}(t,x)$ is finitely supported, and where we equate the normal
to the unit sphere $S$ at the point $\theta,$ with $\theta.$ We now observe
that, for $t>0$,%
\begin{align*}
2\int\limits_{\Pi(\omega,p)}\mathbf{G}_{t}(t,x-y)dx  &  =\delta(\omega\cdot
y-p-t)+\delta(\omega\cdot y-p+t),\\
2\nabla_{y}\int\limits_{\Pi(\omega,p)}\mathbf{G}_{t}(t,x-y)dx  &
=\omega\left[  \delta^{\prime}(\omega\cdot y-p-t)+\delta^{\prime}(\omega\cdot
y-p+t)\right]  .
\end{align*}
Each of the above equations represents two singular waves propagating in the
opposite directions $\pm\omega$ as $t$ increases. Now with a substitution
$t\rightarrow t-\tau$, for $t>\tau$ we obtain%
\begin{align*}
2\int\limits_{\Pi(\omega,p)}\mathbf{G}_{t}(t-\tau,x-y)dx  &  =\delta
(\omega\cdot y-p-(t-\tau))+\delta(\omega\cdot y-p+t-\tau),\\
2\nabla_{y}\int\limits_{\Pi(\omega,p)}\mathbf{G}_{t}(t-\tau,x-y)dx  &
=\omega\left[  \delta^{\prime}(\omega\cdot y-p-(t-\tau))+\delta^{\prime
}(\omega\cdot y-p+t-\tau)\right]  .
\end{align*}
Since we consider values of $p\geq1$ and $\tau\leq t,$ only one of the
terms in the above formulas will contribute to the integral over sphere~$S$:%
\begin{align}
&  2[\mathcal{R}^{E}v_{t}^{(b)}](t,\omega,p)\nonumber\\
&  =\int\limits_{S}\int\limits_{0}^{t}\frac{\partial}{\partial n}v^{(b)}%
(\tau,\theta)\delta(\omega\cdot\theta-p+t-\tau)d\tau d\theta\nonumber\\
&  -\int\limits_{S}\int\limits_{0}^{t}v^{(b)}(t,\theta)(\theta\cdot
\omega)\delta^{\prime}(\omega\cdot\theta-p+t-\tau)d\tau d\theta\nonumber\\
&  =\int\limits_{S}\frac{\partial}{\partial n}v^{(b)}(\omega\cdot
\theta-p+t,\theta)d\theta-\int\limits_{S}v_{t}^{(b)}(\omega\cdot
\theta-p+t,\theta)(\theta\cdot\omega)d\theta. \label{E:trace_integrals}%
\end{align}

\end{proof}

Next, we formulate and prove the following proposition.

\begin{proposition}
\label{T:just_series}Let $v^{(b)}$ be the solution to the exterior problem
(\ref{E:exterior_problem}) with boundary condition $b(t,\theta)\in C_{0}%
(Z_{1})$, and $[\mathcal{R}^{E}v^{(b)}](t,\omega,p)$ be the exterior Radon
transform of $v^{(b)}$. Then $\mathcal{R}^{E}v^{(b)}$ can be expressed as the
series (\ref{E:Rml}), with functions $R_{l}^{\mathbf{m}}$ expressed through
the Fourier coefficients $\hat{b}_{l}^{\mathbf{m}}(\lambda)$ as shown in
equation (\ref{E:Fourier_Rml}). Moreover, functions $R_{l}^{\mathbf{m}}(t)$
can be represented as convolutions
\begin{equation}
R_{l}^{\mathbf{m}}(t)=(b_{l}^{\mathbf{m}}\ast K_{l})(t)=\int%
\limits_{\mathbb{R}}b_{l}^{\mathbf{m}}(s)K_{l}(t-s)ds,
\label{E:preliminary_conv}%
\end{equation}
with kernels $K_{l}(t)$ defined by equation (\ref{E:convolution_kernels});
these functions are infinitely differentiable for all values of $t.$
\end{proposition}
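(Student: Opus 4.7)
The plan is to start from the integral identity (\ref{E:sphere_integrals}) for $\mathcal{R}^{E} v_t^{(b)}$, which already sidesteps the divergence at infinity of planar integrals of individual modes $h_l^d(\lambda|x|)Y_l^{\mathbf{m}}(\hat{x})$, substitute into it the spherical-harmonic/Fourier expansions (\ref{E:ext_tder}) and (\ref{E:ext_nder}) of $v_t^{(b)}$ and $\partial_n v^{(b)}$ evaluated at the space--time point $(\omega\cdot\theta-p+t,\theta)$, and invoke Fubini. The factorization $e^{-i\lambda(\omega\cdot\theta-p+t)}=e^{-i\lambda(t-p)}e^{-i\lambda\omega\cdot\theta}$ cleanly separates the $(t,p)$-dependence from the $\theta$-dependence, which already explains the $t-p$ structure claimed in (\ref{E:Rml}). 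The interchange of sum and integral is justified by the Schwartz-class decay of $\hat{b}_l^{\mathbf{m}}(\lambda)$ in $\lambda$ and in $l$, inherited from $b\in C_0^\infty(Z_1)$.

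The heart of the argument is to evaluate the two angular integrals
\[
\int_S Y_l^{\mathbf{m}}(\theta)e^{-i\lambda\omega\cdot\theta}\,d\theta \quad\text{and}\quad \int_S(\omega\cdot\theta)Y_l^{\mathbf{m}}(\theta)e^{-i\lambda\omega\cdot\theta}\,d\theta
\]
via the $d$-dimensional Funk--Hecke (plane-wave) expansion. The first equals, up to a universal constant, $j_l^d(\lambda)Y_l^{\mathbf{m}}(\omega)$ where $j_l^d(\lambda):=J_{l+d/2-1}(\lambda)/\lambda^{d/2-1}$; using $(\omega\cdot\theta)e^{-i\lambda\omega\cdot\theta}=i\partial_\lambda e^{-i\lambda\omega\cdot\theta}$, the second equals the same constant times $(j_l^d)'(\lambda)Y_l^{\mathbf{m}}(\omega)$. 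Assembling the difference, the $(l,\mathbf{m})$-coefficient of $2[\mathcal{R}^{E}v_t^{(b)}]$ becomes a constant multiple of the inverse Fourier transform in $t-p$ of $\hat{b}_l^{\mathbf{m}}(\lambda)\,\lambda\bigl[(h_l^d)'j_l^d/h_l^d-(j_l^d)'\bigr] = \hat{b}_l^{\mathbf{m}}(\lambda)\,\lambda\,W(j_l^d,h_l^d)/h_l^d(\lambda)$. Writing $h_l^d = j_l^d+iy_l^d$ with $y_l^d:= Y_{l+d/2-1}/\lambda^{d/2-1}$ and invoking the Bessel Wronskian $J_\nu Y_\nu'-J_\nu'Y_\nu = 2/(\pi\lambda)$, a short calculation yields $W(j_l^d,y_l^d)=2/(\pi\lambda^{d-1})$ and hence $W(j_l^d,h_l^d)=2i/(\pi\lambda^{d-1})$; the bracket collapses to $2i/(\pi\lambda^{d-2}h_l^d(\lambda))$.

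Collecting constants produces an inverse-Fourier representation of $[\mathcal{R}^{E} v_t^{(b)}](t,\omega,p)$ whose coefficient of $Y_l^{\mathbf{m}}(\omega)$ is a multiple of $\mathcal{F}^{-1}[\hat{b}_l^{\mathbf{m}}(\lambda)/(\lambda^{d-2}h_l^d(\lambda))](t-p)$. Dividing by $-i\lambda$ in Fourier --- i.e., integrating in $t$ and fixing the constant of integration via $v^{(b)}(0,\cdot)=0$ --- converts this into (\ref{E:Fourier_Rml}) for $R_l^{\mathbf{m}}(t-p)$ and simultaneously proves the expansion (\ref{E:Rml}). The convolution representation (\ref{E:preliminary_conv}) then comes directly from the convolution theorem applied to (\ref{E:Fourier_Rml}), with $K_l$ as in (\ref{E:convolution_kernels}).

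Finally, for the $C^\infty$ regularity of $R_l^{\mathbf{m}}$ on all of $\mathbb{R}$, I would analyze the multiplier $1/(\lambda^{d-1}h_l^d(\lambda))$. At $|\lambda|\to\infty$, the Hankel asymptotic $h_l^d(\lambda)\sim\lambda^{1/2-d/2}e^{i\lambda}$ makes the multiplier polynomially bounded (in fact decaying for $d\ge 2$), so its product with the Schwartz function $\hat{b}_l^{\mathbf{m}}(\lambda)$ remains Schwartz. At $\lambda\to 0$, $h_l^d$ has a pole of order $l+d-2$ (logarithmic when $l+d/2-1=0$), which only partially cancels $\lambda^{d-1}$ and leaves a residual singularity $\lambda^{l-1}$, at worst a simple pole at $l=0$. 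Even then the multiplier is a tempered distribution whose product with $\hat{b}_l^{\mathbf{m}}$ admits a $C^\infty$ inverse Fourier transform. The main obstacle will be the careful bookkeeping of signs and normalization constants through the Funk--Hecke and Wronskian steps, together with the low-frequency analysis just sketched, which must be done in enough detail to certify smoothness of $K_l$, and hence of $R_l^{\mathbf{m}}$, uniformly in $(l,\mathbf{m})$.
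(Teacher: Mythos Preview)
Your proposal is correct and follows essentially the same route as the paper: start from (\ref{E:sphere_integrals}), insert the expansions (\ref{E:ext_tder})--(\ref{E:ext_nder}), evaluate the angular integrals via the plane-wave (Jacobi--Anger/Funk--Hecke) formula together with the $\partial_\lambda$ trick for the $(\omega\cdot\theta)$ factor, collapse the resulting bracket using the Bessel Wronskian, and then integrate in $t$ using $v^{(b)}(0,\cdot)=0$. The one place you work harder than necessary is the $C^\infty$ regularity of $R_l^{\mathbf{m}}$: the paper simply notes that $K_l$ is a (tempered) distribution and $b_l^{\mathbf{m}}\in C_0^\infty(\mathbb{R})$, so the convolution $b_l^{\mathbf{m}}\ast K_l$ is automatically $C^\infty$---no low-frequency multiplier analysis is required, and $K_l$ itself need not be smooth.
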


\begin{proof}
Let us utilize the expression for $\mathcal{R}^{E}v_{t}^{(b)}$ given by
equation (\ref{E:trace_integrals}), and substitute into it expressions
(\ref{E:ext_tder}) and (\ref{E:ext_nder}) for $v_{t}^{(b)}$ and $\frac
{\partial}{\partial n}v^{(b)}$. Then the second integral on the last line of
(\ref{E:trace_integrals}) can be transformed as follows:
\begin{align}
&  -\int\limits_{S}v_{t}^{(b)}(\omega\cdot y-p+t,\theta)(\theta\cdot
\omega)d\theta\nonumber\\
&  =\frac{1}{2\pi}\int\limits_{S}\left[  \int\limits_{\mathbb{R}}\left\{
\sum_{l,\mathbf{m}}\hat{b}_{l}^{\mathbf{m}}(\lambda)Y_{l}^{\mathbf{m}}%
(\theta)\right\}  i\lambda e^{-i\lambda\omega\cdot\theta}(\theta\cdot
\omega)e^{-i\lambda(t-p)}d\lambda\right]  d\theta\nonumber\\
&  =-\frac{1}{2\pi}\int\limits_{S}\left[  \int\limits_{\mathbb{R}}\left\{
\sum_{l,\mathbf{m}}\hat{b}_{l}^{\mathbf{m}}(\lambda)Y_{l}^{\mathbf{m}}%
(\theta)\right\}  \lambda\left\{  \frac{d}{d\lambda}e^{-i\lambda\omega
\cdot\theta}\right\}  e^{-i\lambda(t-p)}d\lambda\right]  d\theta\nonumber\\
&  =-\frac{1}{2\pi}\int\limits_{\mathbb{R}}\lambda e^{-i\lambda(t-p)}\left[
\sum_{l,\mathbf{m}}\hat{b}_{l}^{\mathbf{m}}(\lambda)\frac{d}{d\lambda}\left\{
\int\limits_{S}Y_{l}^{\mathbf{m}}(\theta)e^{-i\lambda\omega\cdot\theta}%
d\theta\right\}  \right]  d\lambda\label{E:awful_mess_2}%
\end{align}
In order to simplify the expression in the curly braces, we use Jacobi-Anger
expansion formula (Lemma 9.10.2 in \cite{Askey}) which in our notation has the
following form:%
\[
\int\limits_{S}e^{-2\pi it(\hat{x}\cdot\hat{y})}Y_{l}^{\mathbf{m}}(\hat
{x})d\hat{x}=2\pi i^{l}Y_{l}^{\mathbf{m}}(\hat{y})\frac{J_{l+\frac{d}{2}%
-1}(2\pi t)}{t^{\frac{d}{2}-1}},
\]
or, with substitution $2\pi t=\lambda$, $\hat{y}=\omega,$ $\hat{x}=\theta,$
\begin{equation}
\int\limits_{S}e^{-i\lambda\omega\cdot\theta}Y_{l}^{\mathbf{m}}(\theta
)d\theta=(2\pi)^{\frac{d}{2}}i^{l}Y_{l}^{\mathbf{m}}(\omega)j_{l}^{d}%
(\lambda).\label{E:JacobiAnger}%
\end{equation}
where%
\[
j_{l}^{d}(\lambda)=\operatorname{Re}h_{l}^{d}(\lambda)=\frac{J_{l+\frac{d}%
{2}-1}(\lambda)}{\lambda^{\frac{d}{2}-1}},\qquad l=0,1,2,....
\]
Now, using (\ref{E:JacobiAnger}), equation (\ref{E:awful_mess_2}) takes the
following form:%
\begin{align}
&  -\int\limits_{S}v_{t}^{(b)}(\omega\cdot y-p+t,\theta)(\theta\cdot
\omega)d\theta\nonumber\\
&  =-(2\pi)^{^{\frac{d}{2}-1}}\int\limits_{\mathbb{R}}\left[  \sum
_{l,\mathbf{m}}\hat{b}_{l}^{\mathbf{m}}(\lambda)i^{l}Y_{l}^{\mathbf{m}}%
(\omega)(j_{l}^{d}(\lambda))^{\prime}\right]  \lambda e^{-i\lambda
(t-p)}d\lambda.\label{E:second-term}%
\end{align}
\indent A similar technique can be used to simplify the first term
in\ (\ref{E:trace_integrals}). Indeed, taking into account formula
(\ref{E:ext_nder}) we obtain%
\begin{align*}
&  \int\limits_{S}\frac{\partial}{\partial n}v^{(b)}(\omega\cdot
\theta-p+t,\theta)d\theta\\
&  =\frac{1}{2\pi}\int\limits_{S}\left[  \int\limits_{\mathbb{R}}\left\{
\sum_{l,\mathbf{m}}\hat{b}_{l}^{\mathbf{m}}(\lambda)\frac{\lambda(h_{l}%
^{d}(\lambda))^{\prime}}{h_{l}^{d}(\lambda)}Y_{l}^{\mathbf{m}}(\theta
)e^{-i\lambda\omega\cdot\theta}\right\}  e^{-i\lambda(t-p)}d\lambda\right]
d\theta\\
&  =\frac{1}{2\pi}\int\limits_{\mathbb{R}}\lambda\left(  \sum_{l,\mathbf{m}%
}\hat{b}_{l}^{\mathbf{m}}(\lambda)\frac{(h_{l}^{d}(\lambda))^{\prime}}%
{h_{l}^{d}(\lambda)}\left[  \int\limits_{S}e^{-i\lambda\omega\cdot\theta}%
Y_{l}^{\mathbf{m}}(\theta)d\theta\right]  \right)  e^{-i\lambda(t-p)}d\lambda.
\end{align*}
Again, using the Jacobi-Anger formula to compute the term in the brackets,%
\begin{align}
&  \int\limits_{S}\frac{\partial}{\partial n}v^{(b)}(\omega\cdot
\theta-p+t,\theta)d\theta\nonumber\\
&  =(2\pi)^{^{\frac{d}{2}-1}}\int\limits_{\mathbb{R}}\lambda\left(
\sum_{l,\mathbf{m}}i^{l}\hat{b}_{l}^{\mathbf{m}}(\lambda)\frac{\left(
h_{l}^{d}\right)  ^{\prime}(\lambda)}{h_{l}^{d}(\lambda)}Y_{l}^{\mathbf{m}%
}(\omega)j_{l}^{d}(\lambda)\right)  e^{-i\lambda(t-p)}d\lambda
.\label{E:first_term}%
\end{align}
Now we can combine expressions (\ref{E:first_term}) and (\ref{E:second-term})
for the terms appearing in (\ref{E:trace_integrals}):%
\begin{align*}
2[\mathcal{R}^{E}v_{t}^{(b)}](t,\omega,p) &  =(2\pi)^{^{\frac{d}{2}-1}}%
\int\limits_{\mathbb{R}}\lambda\left(  \sum_{l,\mathbf{m}}i^{l}\hat{b}%
_{l}^{\mathbf{m}}(\lambda)\frac{\left(  h_{l}^{d}\right)  ^{\prime}(\lambda
)}{h_{l}^{d}(\lambda)}Y_{l}^{\mathbf{m}}(\omega)j_{l}^{d}(\lambda)\right)
e^{-i\lambda(t-p)}d\lambda\\
&  -(2\pi)^{^{\frac{d}{2}-1}}\int\limits_{\mathbb{R}}\lambda\left(
\sum_{l,\mathbf{m}}i^{l}\hat{b}_{l}^{\mathbf{m}}(\lambda)Y_{l}^{\mathbf{m}%
}(\omega)(j_{l}^{d})^{\prime}(\lambda)\right)  e^{-i\lambda(t-p)}d\lambda
\end{align*}%
\[
=(2\pi)^{^{\frac{d}{2}-1}}\int\limits_{\mathbb{R}}\lambda\left(
\sum_{l,\mathbf{m}}i^{l}\hat{b}_{l}^{\mathbf{m}}(\lambda)\frac{\left(
h_{l}^{d}\right)  ^{\prime}(\lambda)j_{l}^{d}(\lambda)-h_{l}^{d}%
(\lambda)(j_{l}^{d}(\lambda))^{\prime}}{h_{l}^{d}(\lambda)}Y_{l}^{\mathbf{m}%
}(\omega)j_{l}^{d}(\lambda)\right)  e^{-i\lambda(t-p)}d\lambda
\]
The following well-known identity for the Wronskian of functions $J_{\mu}$ and
$H_{\mu}^{(1)}$ (formula 10.5.3 in \cite{OlverNIST10})%
\[
\mathcal{W}(J_{\mu},H_{\mu}^{(1)})(\lambda)=2i/(\pi\lambda)
\]
yields%
\begin{align}\label{E:Wronskian}
\left(  h_{l}^{d}(\lambda)\right)  ^{\prime}j_{l}^{d}(\lambda)-h_{l}%
^{d}(\lambda)(j_{l}^{d}(\lambda))^{\prime}\\
=\frac{1}{\lambda^{d-2}}%
\mathcal{W}\left(  J_{l+d/2-1},H_{l+d/2-1}^{(1)}\right)  (\lambda)=\frac
{2i}{\pi\lambda^{d-1}},%
\end{align}
which, in turn, leads to a simplification of the expression for $\mathcal{R}%
^{E}v_{t}^{(b)}$:%
\begin{align*}
\lbrack\mathcal{R}^{E}v_{t}^{(b)}](t,\omega,p) &  =(2\pi)^{^{\frac{d}{2}-1}%
}\frac{i}{\pi}\int\limits_{\mathbb{R}}\left(  \sum_{l,\mathbf{m}}i^{l}%
\frac{\hat{b}_{l}^{\mathbf{m}}(\lambda)}{\lambda^{d-2}h_{l}^{d}(\lambda)}%
Y_{l}^{\mathbf{m}}(\omega)\right)  e^{-i\lambda(t-p)}d\lambda\\
&  =2^{^{\frac{d}{2}-1}}\pi^{^{\frac{d}{2}-2}}\sum_{l,\mathbf{m}}\left(
i^{l+1}\int\limits_{\mathbb{R}}\frac{\hat{b}_{l}^{\mathbf{m}}(\lambda
)}{\lambda^{d-2}h_{l}^{d}(\lambda)}e^{-i\lambda(t-p)}d\lambda\right)
Y_{l}^{\mathbf{m}}(\omega).
\end{align*}
Since $v^{(b)}(0,x)=0$, we have $[\mathcal{R}^{E}v^{(b)}](0,\omega,p)=0.$ Now
the Radon transform $\mathcal{R}^{E}v^{(b)}$ of $v^{(b)}$ can be found by
integration in time $t$:%
\begin{align*}
\lbrack\mathcal{R}^{E}v^{(b)}](t,\omega,p) &  =\int\limits_{0}^{t}%
[\mathcal{R}^{E}v_{\tau}^{(b)}](\tau,\omega,p)d\tau\\
&  =2^{^{\frac{d}{2}-1}}\pi^{^{\frac{d}{2}-2}}\int\limits_{0}^{t}\left[
\sum_{l,\mathbf{m}}\left(  i^{l+1}\int\limits_{\mathbb{R}}\frac{\hat{b}%
_{l}^{\mathbf{m}}(\lambda)}{\lambda^{d-2}h_{l}^{d}(\lambda)}e^{-i\lambda\tau
}e^{i\lambda p}d\lambda\right)  Y_{l}^{\mathbf{m}}(\omega)\right]  d\tau\\
&  =-2^{^{\frac{d}{2}-1}}\pi^{^{\frac{d}{2}-2}}\sum_{l,\mathbf{m}}\left(
i^{l}\int\limits_{\mathbb{R}}\frac{\hat{b}_{l}^{\mathbf{m}}(\lambda)}%
{\lambda^{d-1}h_{l}^{d}(\lambda)}e^{-i\lambda(t-p)}d\lambda\right)
Y_{l}^{\mathbf{m}}(\omega),
\end{align*}
or%
\begin{align}
\lbrack\mathcal{R}^{E}v^{(b)}](t,\omega,p) &  =\sum_{l,\mathbf{m}}%
R_{l}^{\mathbf{m}}(t-p)Y_{l}^{\mathbf{m}}(\omega)\text{ with }%
\label{E:Radon_of_v}\\
R_{l}^{\mathbf{m}}(t) &  =-2^{^{\frac{d}{2}-1}}\pi^{^{\frac{d}{2}-2}}i^{l}%
\int\limits_{\mathbb{R}}\frac{\hat{b}_{l}^{\mathbf{m}}(\lambda)}{\lambda
^{d-1}h_{l}^{d}(\lambda)}e^{-i\lambda t}d\lambda\nonumber\\
&  =-2^{^{\frac{d}{2}}}\pi^{^{\frac{d}{2}-1}}i^{l}\left[  \mathcal{F}%
^{-1}\left(  \frac{\hat{b}_{l}^{\mathbf{m}}(\lambda)}{\lambda^{d-1}h_{l}%
^{d}(\lambda)}\right)  \right]  (t).\label{E:RLM1}%
\end{align}
Equations (\ref{E:Radon_of_v}) and (\ref{E:RLM1}) coincide with equations
(\ref{E:Rml}) and (\ref{E:Fourier_Rml}) announced in Theorem (\ref{T:series})
and in the statement of this proposition. It follows that functions
$R_{l}^{\mathbf{m}}(t)$ can be represented as convolutions
(\ref{E:preliminary_conv}). Moreover, since we have extended function
$b(t,\theta)$ to a $C_{0}^{\infty}(\mathbb{R}\times S)$ function, coefficients
$b_{l}^{\mathbf{m}}(t)$ are infinitely smooth. Therefore, functions
$R_{l}^{\mathbf{m}}(t)$ are also $C_{0}^{\infty}(\mathbb{R})$ functions, as
convolutions of distributions with $C_{0}^{\infty}$ functions.
\end{proof}
\begin{remark}\indent
\begin{enumerate}
\item The appearance of convolutions in (\ref{E:preliminary_conv}) is not
surprising, since the problem we consider is invariant with respect to the
shift in time.

\item We will only need to know the values of $R_{l}^{\mathbf{m}}(t)$ on the
interval $[-1,0]$, and the values of all the derivatives of $R_{l}%
^{\mathbf{m}}(t)$ at $t=0.$ In what follows we will show that distributions
$K_{l}(t)$ have finite support, and as a result, integration in
(\ref{E:convolution_kernels}) is performed over finite intervals, such that
the values of the smooth extension of $b(t,\theta)$ to $t>1$ are, in fact, not
used at all.
\end{enumerate}
\end{remark}

\begin{proposition}
Convolution kernels $K_{l}(t)$ given by equation (\ref{E:convolution_kernels})
vanish for $t<-1$ in the sense of distributions, and thus integration interval
in (\ref{E:preliminary_conv}) can be reduced as
follows:\label{T:finite_interval}%
\begin{equation}
R_{l}^{\mathbf{m}}(\tau)=\int\limits_{0}^{1+\tau}b_{l}^{\mathbf{m}}%
(s)K_{l}(\tau-s)ds.\label{E:interval}%
\end{equation}
Moreover, in order to compute values of $R_{l}^{\mathbf{m}}(t)$ in the
interval $t\in\lbrack-1,0]$ using the above formula, functions $b_{l}%
^{\mathbf{m}}(t)$ need to be known only on the interval $t\in\lbrack0,1].$
\end{proposition}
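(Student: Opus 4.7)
The plan is to combine finite speed of propagation for the exterior wave equation with the convolution representation $R_l^\mathbf{m} = b_l^\mathbf{m} * K_l$ from Proposition~\ref{T:just_series}, then deduce the claimed reduction of the integration interval as a direct corollary.

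The first step is a causality argument. Since $b \in C_0^\infty(Z_1)$ vanishes in a neighborhood of $t = 0$ and the initial data in (\ref{E:exterior_problem}) are zero, the solution $v^{(b)}(t,x)$ obeys the standard finite-propagation bound $v^{(b)}(t,x) = 0$ whenever $|x| > 1 + t$: a signal emitted from $S$ at some time $s \ge 0$ cannot reach a point at distance $|x| - 1$ from $S$ any earlier than at time $s + (|x| - 1) \ge |x| - 1$. Consequently every hyperplane $\{x \cdot \omega = p\}$ with $p > 1 + t$ is disjoint from the support of $v^{(b)}(t,\cdot)$, so
\[
[\mathcal{R}^E v^{(b)}](t,\omega,p) = 0 \qquad\text{whenever } t - p < -1.
\]
Substituting the series (\ref{E:Rml}) and using the orthonormality of $\{Y_l^\mathbf{m}\}$ on $S$, this forces $R_l^\mathbf{m}(\tau) = 0$ for every $\tau < -1$, every $l,\mathbf{m}$, and every choice of $b \in C_0^\infty(Z_1)$.

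The second step promotes this to a support statement for $K_l$ itself. Fix $l,\mathbf{m}$ and specialize the boundary data to $b(t,\theta) = \varphi(t) Y_l^\mathbf{m}(\theta)$ with $\varphi \in C_0^\infty((0,1])$ arbitrary. Then $b_l^\mathbf{m} = \varphi$, and the first step together with (\ref{E:preliminary_conv}) yields $(\varphi * K_l)(\tau) = 0$ for all $\tau < -1$ and every $\varphi \in C_0^\infty((0,1])$. A standard mollification now extracts the support of $K_l$. Take a family of non-negative mollifiers $\varphi_\epsilon \in C_0^\infty((0,\epsilon))$ with $\int \varphi_\epsilon = 1$; each smooth function $\varphi_\epsilon * K_l$ vanishes on $(-\infty,-1)$, and $\varphi_\epsilon \to \delta_0$ in $\mathcal{D}'(\mathbb{R})$ implies $\varphi_\epsilon * K_l \to K_l$ in $\mathcal{D}'(\mathbb{R})$. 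For every $\psi \in C_0^\infty((-\infty,-1))$,
\[
\langle K_l, \psi\rangle = \lim_{\epsilon \to 0^+} \langle \varphi_\epsilon * K_l, \psi\rangle = 0,
\]
which gives $\operatorname{supp} K_l \subseteq [-1,\infty)$.

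Once this is in place, the reduction of (\ref{E:preliminary_conv}) to (\ref{E:interval}) is immediate: in the integrand $b_l^\mathbf{m}(s) K_l(\tau - s)$, the factor $K_l(\tau - s)$ forces $s \le \tau + 1$, while $b_l^\mathbf{m}(s) = 0$ for $s \le 0$, so the effective range is $s \in [0, \tau + 1]$. For $\tau \in [-1, 0]$ this interval lies inside $[0, 1]$, so only the values of $b_l^\mathbf{m}$ on the acquisition interval $[0, 1]$ enter the formula, and the Borel extension of $b$ beyond $t = 1$ never contributes. The main subtlety is bridging the PDE statement (vanishing of $v^{(b)}$ in a spacetime region) with the distributional support statement on $K_l$; the mollification argument above is the cleanest way I see to commute the $\mathcal{D}'$-limit with convolution, and it depends on $K_l$ being a well-defined tempered distribution, as already guaranteed by Proposition~\ref{T:just_series}.
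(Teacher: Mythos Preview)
Your proof is correct and follows essentially the same route as the paper's: finite speed of propagation for the exterior problem forces $[\mathcal{R}^E v^{(b)}](t,\omega,p)=0$ when $t-p<-1$, orthonormality of the spherical harmonics then gives $R_l^{\mathbf m}(\tau)=0$ for $\tau<-1$, and testing against arbitrary boundary data extracts $\operatorname{supp}K_l\subset[-1,\infty)$. The only difference is cosmetic---where the paper appeals in one line to the arbitrariness of $b_l^{\mathbf m}$ as a test function, you spell out the distributional limit via a mollifier family $\varphi_\epsilon\to\delta_0$, which is a cleaner way to justify the same step.
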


\begin{proof}
Consider an arbitrary $C_{0}^{\infty}(\mathbb{R\times}S)$ function
$b(t,\theta)$, finitely supported in $t$ on the interval $(0,T)$ for some
$T>1,$ and function $v^{(b)}(t,x)$ that is a solution to the exterior problem
in $\mathbb{R}\times B^{c}$, vanishing at $t=-\infty$:
\[
\left\{
\begin{array}
[c]{ll}%
\frac{\partial^{2}v^{(b)}(t,x)}{\partial t^{2}}-\Delta v^{(b)}(t,x)=0, &
(t,x)\in\mathbb{R}\times B^{c},\\
v^{(b)}(t,x)=0,\quad v_{t}^{(b)}(t,x)=0, & (t,x)\in(-\infty,0)\times B^{c},\\
v^{(b)}(t,\theta)=b(t,\theta). & (t,\theta)\in\mathbb{R}\times S.
\end{array}
\right.
\]
Due to the finite speed of sound, solution $v^{(b)}(t,x)$ of this problem is
identically zero outside of the interior of the cone $|x|\leq t+1.$ Therefore,
for the exterior Radon transform of $v^{(b)}$ we have%
\[
\lbrack\mathcal{R}^{E}v^{(b)}](t,\omega,p)=0\text{ for }p-t>1,\quad\omega\in
S.
\]
In terms of Fourier coefficients of $\mathcal{R}^{E}$ (see equation
(\ref{E:Radon_of_v})), this implies
\[
R_{l}^{\mathbf{m}}(t-p)=0\text{ for }t-p<-1,\quad l=0,1,2,3,..,\quad\left\vert
\mathbf{m}\right\vert \leq l,
\]
or, equivalently $R_{l}^{\mathbf{m}}(t)=0$ for $t<-1.$ Values of the Fourier
coefficients $R_{l}^{\mathbf{m}}(t)$ can be expressed through the Fourier
coefficients $b_{l}^{\mathbf{m}}(t)$ of \ $b(t,\theta)$ by convolutions
(\ref{E:preliminary_conv}). Since $b(t,\theta)$ vanishes for $t<0$,
coefficients $b_{l}^{\mathbf{m}}(t)$ also vanish for these values of $t$.
Therefore, equation (\ref{E:preliminary_conv}) can be re-written as follows
\begin{equation}
R_{l}^{\mathbf{m}}(t)=\int\limits_{0}^{\infty}b_{l}^{\mathbf{m}}%
(s)K_{l}(t-s)ds.\label{E:simple_convolution}%
\end{equation}
Since function $b(t,\theta)$ is arbitrary, so are coefficients $b_{l}%
^{\mathbf{m}}(t)$ (subject to conditions of being smooth and supported on
$(0,T))$. Then, for a fixed $l,\mathbf{m}$, vanishing of $R_{l}^{\mathbf{m}%
}(t)$ for $t<-1$ implies vanishing of the integral (\ref{E:simple_convolution}%
) for $t<-1,$ for an arbitrary test function $b_{l}^{\mathbf{m}}(t)$. It
follows that for any $t<-1,$ distribution $K_{l}(t-s)$ vanishes for all
$s\geq0.$ In other words,%
\[
K_{l}(\tau)=0\text{ for }\tau<-1,\quad l=0,1,2,3,..,\quad\left\vert
\mathbf{m}\right\vert \leq l,
\]
as we wanted to show. This implies that the integration interval in
(\ref{E:simple_convolution}) can be further reduced, yielding equation
(\ref{E:interval}). Clearly, in order to compute values of $R_{l}^{\mathbf{m}%
}(t)$ in the interval $t\in\lbrack-1,0]$ using formula (\ref{E:interval})
functions $b_{l}^{\mathbf{m}}(t)$ need to be known only on the interval
$t\in\lbrack0,1]$.This completes the proof of the proposition.
\end{proof}

Finally, the proof of Theorem~\ref{T:series} results by combining the
statements of Propositions~\ref{T:just_series} and~\ref{T:finite_interval}

\section{Remarks and conclusions}\label{S:remarks}

\begin{itemize}
\item In order to solve the exterior problem in terms of expansion in
spherical harmonics, we smoothly extended the $C_{0}^{\infty}(Z_{1})$
candidate function $b(t,\theta)$ to $C_{0}^{\infty}(\mathbb{R}\times S)$. Such
an extension has allowed us to obtain the expression (\ref{E:Rml}) for the
exterior Radon transform of the solution $v^{(b)}(t,x)$, with functions
$R_{l}^{\mathbf{m}}(t)$ computed as convolutions (\ref{E:varlimits}). Such a
smooth extension can be constructed rather arbitrarily, using the Borel's
lemma. However, as proven in Proposition \ref{T:finite_interval}, computation
of $R_{l}^{\mathbf{m}}(t)$ in the interval $t\in\lbrack-1,0]$ needed for
Theorem \ref{T:main} requires the knowledge of the values of $b(t,\theta)$
only on $C_{0}^{\infty}(Z_{1})$. This eliminates any effect of the freedom of
choice of the smooth extension of $b.$

\item The results of this paper are somewhat related to the results obtained
in \cite{finch2006range} and \cite{venky2023range}, where in odd dimensions certain symmetries (with respect to $t=1$)
were observed in the range of the wave operator and the spherical means
operator. The presence
of such symmetries serves as a necessary and sufficient condition for a
function to be in the range of the corresponding operator. Results of our
paper, however, pertain to functions defined on the half-time interval
$t\in\lbrack0,1]$. They hold in both even and odd dimensions.

\item We have formulated our range conditions for the wave operator. However, related
results can be obtained for the spherical means operator, by utilizing the
connection between the two problems, expressed through equations
(\ref{E:means}), (\ref{E:g_from_means}).
\end{itemize}

\section{Acknowledgments}

A significant part of this paper was written during the authors' participation
in the Programme "Rich and Nonlinear Tomography" held at the Isaac Newton
Institute (INI) for Mathematical Sciences. The authors are thankful to the INI and to Prof. V.
Krishnan, whose presentation at the Programme has alerted us about open
problems in this area of research. Both authors acknowledge partial support by
the NSF, through awards NSF/DMS 1816430 and 2007408 (the first author),\ and
NSF/DMS\ 2405348 (the second author).


\begin{thebibliography}{99}                                                                                               %

\bibitem {AgrFK09}M.~Agranovsky, D.~Finch, and P.~Kuchment. \newblock Range
conditions for a spherical mean transform.
\newblock {\em Inverse Probl. Imaging}, 3(3):373--382, 2009.

\bibitem {AgrKQ07}M.~Agranovsky, P.~Kuchment, and E.~T. Quinto.
\newblock Range descriptions for the spherical mean {R}adon transform.
\newblock {\em J. Funct. Anal.}, 248(2):344--386, 2007.

\bibitem {venky2023range}D.~Agrawal, G.~Ambartsoumian, V.~P. Krishnan, and Ni.
Singhal. \newblock A simple range characterization for spherical mean
transform in odd  dimensions and its applications.
\newblock {\em arXiv preprint arXiv:2310.20702}, 2023.

\bibitem {AmbGouia18}G.~Ambartsoumian, R.~Gouia-Zarrad, V.~P. Krishnan, and
S.~Roy. \newblock Image reconstruction from radially incomplete spherical
{R}adon data. \newblock {\em European J. Appl. Math.}, 29(3):470--493, 2018.

\bibitem {Anashalf}M.~Anastasio, J.~Zhang, X.~Pan, Y.~Zou, G.~Ku, and L.~V.
Wang. \newblock Half-time image reconstruction in thermoacoustic tomography.
\newblock {\em IEEE Trans. Med. Imaging}, 24:199--210, 2005.

\bibitem {Askey}E.~Andrews, R.~Askey, and R.~Roy.
\newblock {\em Special functions}. \newblock Cambridge University Press, 1999.

\bibitem {Do-Kun18}N.~Do and L.~Kunyansky. \newblock Theoretically exact
photoacoustic reconstruction from spatially and  temporally reduced data.
\newblock {\em Inverse Problems}, 34(9):094004, 26, 2018.

\bibitem {EllerKun}M.~Eller, P.~Hoskins, and L.~Kunyansky.
\newblock Microlocally accurate solution of the inverse source problem of
thermoacoustic tomography. \newblock {\em Inverse Problems}, 36(8):085012, 21, 2020.

\bibitem {Finch07}D.~Finch, M.~Haltmeier, and Rakesh. \newblock Inversion of
spherical means and the wave equation in even  dimensions.
\newblock {\em SIAM J. Appl. Math.}, 68(2):392--412, 2007.

\bibitem {Finch04}D.~Finch, S.~K. Patch, and Rakesh. \newblock Determining a
function from its mean values over a family of spheres.
\newblock {\em SIAM J. Math. Anal.}, 35(5):1213--1240 (electronic), 2004.

\bibitem {finch2006range}D.~Finch and Rakesh. \newblock The range of the
spherical mean value operator for functions  supported in a ball.
\newblock {\em Inverse Problems}, 22(3):923--38, 2006.

\bibitem {GGV}I.~M. Gel'~fand, M.~I. Graev, and N.~Ya. Vilenkin.
\newblock {\em Generalized functions. {V}ol. 5}. \newblock Academic Press
[Harcourt Brace Jovanovich, Publishers], New  York-London, 1966 [1977].
\newblock Integral geometry and representation theory, Translated from the
Russian by Eugene Saletan.

\bibitem {GGG80}I.~M. Gelfand, S.~G. Gindikin, and M.~I. Graev.
\newblock Integral geometry in affine and projective spaces. \newblock In
\emph{Current problems in mathematics, {V}ol. 16 ({R}ussian)},  pages 53--226,
228. Akad. Nauk SSSR, Vsesoyuz. Inst. Nauchn. i Tekhn.  Informatsii, Moscow, 1980.

\bibitem {GGGbook03}I.~M. Gelfand, S.~G. Gindikin, and M.~I. Graev.
\newblock {\em Selected Topics in Integral Geometry}, volume 220.
\newblock American Mathematical Soc., 2003.

\bibitem {Helg-RT-99}S.~Helgason. \newblock {\em {The Radon transform}}.
\newblock Birkh{\"a}user, Berlin, second edition, 1999.

\bibitem {Helgason10}S.~Helgason.
\newblock {\em Integral Geometry and {R}adon Transforms}. \newblock Springer
Science \& Business Media, 2010.

\bibitem {JohnSph04}F.~John.
\newblock {\em Plane waves and spherical means applied to partial differential
equations}. \newblock Dover Publications, Inc., Mineola, NY, 2004.
\newblock Reprint of the 1955 original.

\bibitem {KrugerLFA-MP-95}R.~A. Kruger, P.~Liu, Y.~Fang, and R.~Appledorn.
\newblock {Photoacoustic ultrasound (PAUS) Reconstruction tomography}.
\newblock {\em Med. Phys.}, 22:1605--1610, 1995.

\bibitem {KrugerRK-MP-99}R.~A. Kruger, D.~R. Reinecke, and G.~A. Kruger.
\newblock {Thermoacoustic computed tomography: technical considerations}.
\newblock {\em Med. Phys.}, 26:1832--1837, 1999.

\bibitem {KuchCBMS14}P.~Kuchment.
\newblock {\em The {R}adon Transform and Medical Imaging}, volume~85.
\newblock SIAM, 2014.

\bibitem {KuKuEncycl}P.~Kuchment and L.~Kunyansky.
\newblock {\em Mathematics of Photoacoustic and Thermoacoustic Tomography},
pages 817--865. \newblock Springer New York, New York, NY, 2011.

\bibitem {KuKuHand15}P.~Kuchment and L.~Kunyansky. \newblock Mathematics of
photoacoustic and thermoacoustic tomography. \newblock In \emph{Handbook of
mathematical methods in imaging. {V}ol. 1, 2, 3},  pages 1117--1167. Springer,
New York, 2015.

\bibitem {KuchScherzer}P.~Kuchment and O.~Scherzer.
\newblock {\em Tomography, Photoacoustic, and Thermoacoustic}, pages
1488--1496. \newblock Springer Berlin Heidelberg, Berlin, Heidelberg, 2015.

\bibitem {KunyanskyIP07}L.~A. Kunyansky. \newblock Explicit inversion formulae
for the spherical mean {R}adon transform. \newblock {\em Inverse Problems},
23(1):373--383, 2007.

\bibitem {laliena}A.~R. Laliena and F.-J. Sayas. \newblock A distributional
version of {K}irchhoff's formula. \newblock {\em J. Math. Anal. Appl.},
359(1):197--208, 2009.

\bibitem {Ludwig}D.~Ludwig. \newblock The {R}adon transform on euclidean
space. \newblock {\em Comm. Pure Appl. Math.}, 19:49--81, 1966.

\bibitem {Natt_old}F.~Natterer.
\newblock {\em The Mathematics of Computerized Tomography}, volume~32.
\newblock SIAM, 2nd edition, 2001.

\bibitem {OlverNIST10}Frank W.~J. Olver, Daniel~W. Lozier, Ronald~F. Boisvert,
and Charles~W. Clark,  editors.
\newblock {\em N{IST} handbook of mathematical functions}. \newblock U.S.
Department of Commerce, National Institute of Standards and  Technology,
Washington, DC; Cambridge University Press, Cambridge, 2010. \newblock With 1
CD-ROM (Windows, Macintosh and UNIX).

\bibitem {Oraev94}A.~A. Oraevsky, S.~L. Jacques, R.~O. Esenaliev, and F.~K.
Tittel. \newblock Laser-based optoacoustic imaging in biological tissues.
\newblock {\em Proc. SPIE}, 2134A:122--128, 1994.

\bibitem {variations}R.Leis. \newblock {\em Variations on the wave equation},
volume 24(6) of \emph{ Mathematical methods in the applied sciences}. \newblock 2001.

\bibitem {Wasow}W.~Wasow.
\newblock {\em Asymptotic expansions for ordinary differential equations}.
\newblock Dover Publications, Inc., New York, 1987. \newblock Reprint of the
1976 edition.

\bibitem {MXW1}M.~Xu and L.~V. Wang. \newblock Time-domain reconstruction for
thermoacoustic tomography in a  spherical geometry.
\newblock {\em IEEE Transactions on Medical Imaging}, 21(7):814--822, 2002.
\end{thebibliography}
\end{document}